\theoremstyle{plain}
\newtheorem{teo}{Theorem}[section]
\newtheorem{lem}[teo]{Lemma}
\newtheorem{cor}[teo]{Corollary}
\newtheorem{prop}[teo]{Proposition}
\newtheorem*{clm}{Claim}
\theoremstyle{definition}
\newtheorem{fact}[teo]{Fact}
\newtheorem{remark}[teo]{Remark}
\newtheorem{defi}[teo]{Definition}
\newtheorem*{quest}{Question}
\def\beh{\begin{clm}}
\def\ebeh{\end{clm}}
\def\bewbeh{\par\noindent{\em Proof of Claim: }}
\def\Ind{\setbox0=\hbox{$x$}\kern\wd0\hbox to 0pt{\hss$\mid$\hss}
\lower.9\ht0\hbox to 0pt{\hss$\smile$\hss}\kern\wd0}
\def\Notind{\setbox0=\hbox{$x$}\kern\wd0\hbox to 0pt{\mathchardef
\nn=12854\hss$\nn$\kern1.4\wd0\hss}\hbox to
0pt{\hss$\mid$\hss}\lower.9\ht0 \hbox to
0pt{\hss$\smile$\hss}\kern\wd0}
\def\ind{\mathop{\mathpalette\Ind{}}}
\def\nind{\mathop{\mathpalette\Notind{}}}
\newcommand{\sub}{\subseteq}
\def\Aut{\mathrm{Aut}}
\def\bdd{\mathrm{bdd}}
\def\dcl{\mathrm{dcl}}
\def\acl{\mathrm{acl}}
\def\cl{\mathrm{cl}}
\def\cb{\mathrm{Cb}}
\def\tp{\mathrm{tp}}
\def\stp{\mathrm{stp}}
\def\Lstp{\mathrm{Lstp}}
\def\Q{\mathbb Q}
\def\P{\mathcal P}
\title[Elimination of Hyperimaginaries and Stable Independence]{Elimination of Hyperimaginaries and Stable Independence in simple CM-trivial theories}
\author{D. Palac\'\i n and F. O. Wagner}
\thanks{The first author was partially supported
by research project MTM 2008-01545 of the Spanish government and
research project 2009SGR 00187 of the Catalan government. The second author was partially
supported by the research projct ANR-09-BLAN-0047 of the Agence Nationale de la Recherche.\newline
This work was partially done while the first author was visiting the Institut
Camille Jordan of Lyon. The first author wishes to express his
gratitude to the members of Lyon logic group for their hospitality.
He also would like to thank Enrique Casanovas for his valuable
comments.}
\address{Universitat de Barcelona; Departament de L\`ogica, Hist\`oria
i Filosofia de la Ci\`encia, Montalegre 6, 08001 Barcelona, Spain}
\address{Universit\'e de Lyon; CNRS; Universit\'e Lyon 1; Institut
Camille Jordan UMR5208, 43 boulevard du 11 novembre 1918, F--69622
Villeurbanne Cedex, France}
\email{dpalacin@ub.edu}
\email{wagner@math.univ-lyon1.fr}
\begin{document}

\begin{abstract} In a simple CM-trivial theory every hyperimaginary is interbounded with a sequence of finitary hyperimaginaries. Moreover, such a theory eliminates hyperimaginaries whenever it eliminates finitary hyperimaginaries.
In a supersimple CM-trivial theory, the independence relation is stable.
\end{abstract}

\maketitle

\section{Introduction}

An important notion introduced by Shelah for a first-order theory is
that of an {\em imaginary} element: the class of a finite tuple by a
$\emptyset$-definable equivalence relation. The construction obtained
by adding all imaginary elements to a structure does not change its
basic model-theoretic properties, but introduces a convenient context
and language to talk about quotients (by definable equivalence
relations) and {\em canonical parameters} of definable sets. In the
context of a stable theory it also ensures the existence of {\em
canonical bases} for arbitrary complete types, generalizing the
notion of a field of definition of an algebraic variety.

The generalization of stability theory to the wider class of simple
theories necessitated the introduction of hyperimaginaries, classes
of countable tuples modulo $\emptyset$-type-definable equivalence
relations. Although the relevant model-theory for hyperimaginaries
has been reasonably well understood \cite{hart-kim-pillay}, they
cannot simply be added as extra sorts to the underlying structure,
since inequality of two hyperimaginaries amounts to
non-equivalence, and thus {\em a priori} is an open, but not a closed
condition. While hyperimaginary elements are needed for the general
theory, all known examples of a simple theory {\em eliminate} them in
the sense that they are interdefinable (or at least interbounded) with a sequence of ordinary
imaginaries; the latter condition is called {\em weak elimination}. It has thus been asked (and even been conjectured):
\begin{quest} Do all simple theories eliminate
hyperimaginaries?\end{quest}
The answer is positive for stable
theories \cite{poizat-pillay}, and for supersimple theories
\cite{BPW}. Among non-simple theories, the relation of being
infinitely close in a non-standard real-closed field gives rise to
non-eliminable hyperimaginaries; Casanovas and the second author have
constructed non-eliminable hyperimaginaries in a theory without the
strict order property \cite{Casa-Wag}.

A hyperimaginary is {\em finitary} if it is the class of a finite
tuple modulo a type-definable equivalence relation. Kim \cite{Kim}
has shown that small theories eliminate finitary hyperimaginaries,
and a result of Lascar and Pillay \cite{Lascar-Pillay} states that
bounded hyperimaginaries can be eliminated in favour of finitary
bounded ones. We shall show that in a CM-trivial simple theory all
hyperimaginaries are interbounded with sequences of finitary
hyperimaginaries. We shall deduce that in such a theory hyperimaginaries
can be eliminated in favour of finitary ones. In
particular, a small CM-trivial simple theory eliminates
hyperimaginaries. However, even the question whether all one-based simple theories eliminate hyperimaginaries is still open.

Elimination of hyperimaginaries is closely related to another question, the {\em stable forking conjecture}:
\begin{quest} In a simple theory, if $a\nind_BM$ for some model $M$ containing $B$, is there a stable formula in $\tp(a/M)$ which forks over $B$~?\end{quest}
If we do not require $M$ to be a model, nor to contain $B$, this is called {\em strong stable forking}.
Every known simple theory has stable forking; Kim \cite{Kim2} has shown that one-based simple theories with elimination of hyperimaginaries have stable forking. Kim and Pillay \cite{KP} have strengthened this to show that one-based simple theories with
weak elimination of imaginaries
hyperimaginaries have strong stable forking; on the other hand pseudofinite fields (which are supersimple of SU-rank $1$) do not. Conversely, stable forking implies
weak elimination of hyperimaginaries (Adler).

While we shall not attack the stable forking conjecture as such, we shall show in the last section that the independence relation $x\ind_{y_1}y_2$ is stable, meaning that it cannot order an infinite indiscernible sequence.

\section{Preliminaries}

As usual, we shall work in the monster model $\mathfrak{C}$ of a
complete first-order theory (with infinite models), and all sets of
parameters and all sequences of elements will live in
$\mathfrak{C}^{eq}$. Given any sequences $a,b$ and any set of
parameters $A$, we write $a\equiv_A b$ whenever $a$ and $b$ have the
same type over $A$. We shall write $a\equiv_A^s b$ if in addition $a$
and $b$ lie in the same class modulo all $A$-definable finite
equivalence relations (i.e.\ if $a$ and $b$ have the same {\em strong
type} over $A$), and $a\equiv_A^{Ls} b$ if they lie in the same class
modulo all $A$-invariant bounded equivalence relations (i.e.\ if $a$
and $b$ have the same {\em Lascar strong type} over $A$). Recall that
a theory is $G$-compact over a set $A$ iff $\equiv^{Ls}_A$ is
type-definable over $A$ (in which case it is the finest bounded
equivalence relation type-definable over $A$). A theory $T$ is
$G$-compact whenever it is $G$-compact over any $A$. In particular,
simple theories are $G$-compact \cite{Kim}.

\begin{defi} A hyperimaginary $h$ is
{\em finitary} if $h\in \dcl^{heq}(a)$ for
some finite tuple $a$ of imaginaries, and {\em quasi-finitary} if
$h\in\bdd(a)$ for some finite tuple $a$ of imaginaries.\end{defi}
\begin{defi} A hyperimaginary $h$ is {\em eliminable} if it is
interdefinable with
a sequence $e=(e_i:i\in I)$ of imaginaries, i.e.\ if there is such a
sequence $e$ with $\dcl^{heq}(e)=\dcl^{heq}(h)$. A theory $T$ eliminates (finitary/quasi-finitary)
hyperimaginaries if all (finitary/quasi-finitary) hyperimaginaries
are eliminable in all models of $T$.\end{defi}
\begin{remark}\cite[Corollary 1.5]{Lascar-Pillay} If $h\in\dcl^{heq}(a)$, then there is a type-definable equivalence relation $E$ on $\tp(a)$ such that $h$ and the class $a_E$ of $a$ modulo $E$ are interdefinable.\end{remark}
\begin{lem}\label{elimhyp6} Let $e$ be a finitary hyperimaginary. If $T$ eliminates finitary hyperimaginaries,
then $T(e)$ eliminates finitary hyperimaginaries.\end{lem}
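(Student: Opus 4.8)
The plan is to reduce the statement to two observations: a finitary hyperimaginary of $T(e)$ is automatically a finitary hyperimaginary of $T$, and any elimination achieved in $T$ persists once $e$ is named as a constant. For the first, I would take an arbitrary finitary hyperimaginary $h$ of $T(e)$, say $h\in\dcl^{heq}_{T(e)}(a)$ with $a$ a finite tuple of imaginaries of $T(e)$. Since definable closure in $T(e)$ over a set $X$ is just definable closure in $T$ over $X\cup\{e\}$, and since each imaginary of $T(e)$ is the class of a finite tuple from $\mathfrak C^{eq}$ modulo an $e$-definable equivalence relation and hence lies in $\dcl^{heq}(\bar a\,e)$ for some finite tuple $\bar a$, we get $h\in\dcl^{heq}(\bar a\,e)$ for a finite tuple $\bar a$. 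As $e$ is finitary, $e\in\dcl^{heq}(b)$ for some finite tuple $b$ of imaginaries of $T$, so $h\in\dcl^{heq}(\bar a\,b)$; thus $h$ is a finitary hyperimaginary of $T$.

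Next I would invoke the hypothesis: since $T$ eliminates finitary hyperimaginaries, $h$ is interdefinable in $T$ with a sequence $c=(c_i:i\in I)$ of imaginaries of $T$, i.e.\ $\dcl^{heq}(h)=\dcl^{heq}(c)$. Substituting $c$ for $h$ inside a definable closure then yields $\dcl^{heq}(h\,e)=\dcl^{heq}(c\,e)$, which is precisely the statement that $h$ and $c$ are interdefinable over $e$, that is, in $T(e)$. Finally, every imaginary of $T$ is in particular an imaginary of $T(e)$ (its defining equivalence relation is still $\emptyset$-definable, a fortiori $e$-definable), so $c$ is a sequence of imaginaries of $T(e)$ witnessing that $h$ is eliminable in $T(e)$; since $h$ was arbitrary, this finishes the argument.

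I do not anticipate a genuine obstacle here: the statement is morally clear once one notes that naming a finitary hyperimaginary cannot manufacture essentially new finitary hyperimaginaries. The only care needed is in the bookkeeping — correctly identifying the imaginaries of $T(e)$ (the Remark of Lascar--Pillay quoted above, or a direct argument, handles the reduction to a type-definable equivalence relation on a finite tuple), and checking the harmless but slightly fiddly fact that $\dcl^{heq}(h\,e)$ depends on $h$ only through $\dcl^{heq}(h)$, which is what lets interdefinability over $\emptyset$ in $T$ upgrade to interdefinability over $e$ in $T(e)$.
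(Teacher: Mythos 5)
Your proposal follows essentially the same route as the paper: realise $h$ as a finitary hyperimaginary of $T$, eliminate it there, and observe that elimination over $\emptyset$ persists after naming $e$. The outline is sound, and the second half (from $\dcl^{heq}(h)=\dcl^{heq}(c)$ to $\dcl^{heq}(he)=\dcl^{heq}(ce)$ and hence eliminability in $T(e)$) is exactly what the paper does. The one point to press you on is that the step you set aside as ``bookkeeping'' is in fact the entire content of the paper's proof. A finitary hyperimaginary of $T(e)$ is a priori the class of a finite tuple $b$ modulo an equivalence relation that is only type-definable \emph{over} $e$; to view it as an element of $\mathfrak{C}^{heq}$ at all, one must produce a $\emptyset$-type-definable equivalence relation. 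The paper does this by writing $e$ as $a_E$ for a finite tuple $a$ and $\emptyset$-type-definable $E$, checking that the relation $F_a$ with $h$ interdefinable with $b_{F_a}$ can be chosen to depend only on the $E$-class of $a$, and gluing $E$ and $F_a$ into a single $\emptyset$-type-definable relation $\bar E$ on the tuple $ab$; the invariance of $F_a$ under $E$ is precisely what makes $\bar E$ a $\emptyset$-type-definable equivalence relation. Note also that the resulting $T$-hyperimaginary $(ab)_{\bar E}$ is interdefinable with $h$ only \emph{over} $e$, since it additionally encodes $e=a_E$; so the clean statement is $\dcl^{heq}\bigl((ab)_{\bar E}\bigr)=\dcl^{heq}(he)$ rather than $\dcl^{heq}(h)$. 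This costs nothing --- eliminability in $T(e)$ only requires interdefinability over $e$ --- but it is the reason the lemma delivers elimination in $T(e)$ and not in $T$, and your write-up should make that identification explicit rather than treat $h$ as literally lying in $\dcl^{heq}(\bar a\,b)$.
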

\begin{proof} Let $a$ be a finite tuple with $e\in\dcl^{heq}(a)$, and
$h$ a finitary hyperimaginary over $e$. So there is a finite tuple
$b$ with $h\in\dcl^{heq}(eb)\subseteq\dcl^{heq}(ab)$. Then there is a
type-definable equivalence relation $E$ over $\emptyset$ such that
$e$ and $a_E$ are interdefinable,
and a type-definable equivalence relation $F_a$ over $a$
such that $h$ and $b_{F_a}$ are interdefinable.
Moreover, $F_a$ only depends on the $E$-class of
$a$, that is, if $a'Ea$, then $F_{a'}=F_a$.

Type-define an equivalence relation by
$$xy\bar Euv\quad\Leftrightarrow\quad xEu\land yF_xv.$$
It is easy to see that $h$ is interdefinable with $(ab)_{\bar E}$ over $e$.
Moreover, $(ab)_{\bar E}$ is clearly finitary, and hence eliminable
in $T$. So $h$ is eliminable in $T(e)$.\end{proof}

The following fact appears in \cite[Proof of Proposition
2.2]{Lascar-Pillay}, but was first stated as such in \cite[Lemma 2.17]{BPW}.
\begin{fact}\label{elimhyp1} Let $h$ be a
hyperimaginary and let $a$ be a sequence of imaginaries such that
$a\in\bdd(h)$ and $h\in\dcl^{heq}(a)$. Then, $h$ is
eliminable.\end{fact}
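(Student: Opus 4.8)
The plan is to replace $h$ by the class $a_E$ of $a$ modulo a suitable $\emptyset$-type-definable equivalence relation, use the hypothesis $a\in\bdd(h)$ to see that the orbit of $a$ over $h$ is not merely bounded but \emph{finite}, and then observe that $h$ is interdefinable with the canonical parameter of that finite set --- which in turn is coded by a sequence of imaginaries.

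First, by the Remark above, since $h\in\dcl^{heq}(a)$ we may assume $h=a_E$ for a type-definable equivalence relation $E$ on $p:=\tp(a)$, over $\emptyset$. Next, I would compute the orbit of $a$ under $\Aut(\mathfrak C/a_E)$. An automorphism $\sigma$ fixes $a_E$ precisely when $\sigma(a)\mathrel{E}a$, so, using homogeneity of the monster, this orbit is exactly the realization set of the partial type $\pi(x):=p(x)\cup\{E(x,a)\}$, which is type-definable over $a$. Since $a\in\bdd(h)=\bdd(a_E)$, the set $\pi(\mathfrak C)$ is bounded; but a bounded type-definable set over a small parameter set is finite --- if it had infinitely many realizations, a routine compactness argument would produce arbitrarily many. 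So $\pi(\mathfrak C)=\{a=a_0,\dots,a_{n-1}\}$ for some $n$; call this finite set $F$.

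Now $a_E$ is interdefinable with the canonical parameter $c$ of $F$: any $\sigma$ fixing $a_E$ preserves the $E$-class of $a$ and also preserves $p(\mathfrak C)$, hence permutes $F$; conversely if $\sigma(F)=F$ then $\sigma(a)\in F$, so $\sigma(a)\mathrel{E}a$ and $\sigma$ fixes $a_E$. Thus $\Aut(\mathfrak C/a_E)=\Aut(\mathfrak C/c)$, so $a_E$ and $c$ are interdefinable in $\mathfrak C^{heq}$. Finally, although the $a_i$ may be infinite sequences of imaginaries, the canonical parameter of a finite set of sequences of imaginaries is always interdefinable with a sequence of imaginaries: for each finite set $s$ of indices let $e_s$ be the (genuinely imaginary) canonical parameter of the finite set $\{a_i\restriction s:i<n\}$ of finite tuples of imaginaries; then $c$ is interdefinable with $(e_s)_s$. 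One direction is clear, and for the other, if $\sigma$ fixes every $e_s$ then for each $i<n$ the sets $\{j<n:\sigma(a_i)\restriction s=a_j\restriction s\}$ are non-empty, shrink as $s$ grows and lie inside $\{0,\dots,n-1\}$, hence have non-empty intersection, which yields some $j$ with $\sigma(a_i)=a_j$; so $\sigma$ permutes $F$ and fixes $c$. Chaining these identifications, $h$ is interdefinable with $a_E$, with $c$, and with the sequence of imaginaries $(e_s)_s$, so $h$ is eliminable.

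I expect the real content to sit in the second step: one must take care to present the orbit of $a$ over $h$ as a genuinely type-definable set --- which is precisely what replacing $h$ by $a_E$ buys us --- so that the boundedness coming from $a\in\bdd(h)$ can be upgraded to finiteness. Once that is in hand, the remainder is bookkeeping with canonical parameters in $\mathfrak C^{heq}$, the only mildly delicate point being the last coding step, which is needed solely because $a$ is allowed to be an infinite sequence rather than a finite tuple.
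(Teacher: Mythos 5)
The paper does not actually prove this statement --- it quotes it as a Fact from Lascar--Pillay and from \cite[Lemma 2.17]{BPW} --- so your proposal has to be judged on its own. Your overall strategy (replace $h$ by $a_E$, control the orbit of $a$ over $h$, and code that orbit by imaginary canonical parameters of finite sets) is the right one and is essentially the standard argument. But there is a genuine error at the pivotal step: the claim that a bounded type-definable set over a small parameter set is finite is \emph{false} when the set consists of infinite tuples, and $a$ is allowed to be an infinite sequence here. The compactness argument you invoke needs ``$x\neq y$'' to be expressible by a formula in order to stretch an infinite set of realizations into an unbounded one; for infinite tuples inequality is an infinite disjunction, and the argument breaks. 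Concretely: let $T$ have infinitely many disjoint $\emptyset$-definable two-element sets $P_n$, let $a=(c_n)_n$ pick one point from each, and let $E$ be ``agreement on the even coordinates''. Then $h=a_E\in\dcl^{heq}(a)$ and $a\in\bdd(h)$ (the orbit has size $2^{\aleph_0}$, which is bounded), yet $\pi(\mathfrak C)$ is infinite. So $F$ need not be finite, and everything you build on the finite set $F=\{a_0,\dots,a_{n-1}\}$ --- its canonical parameter $c$, and the intersection argument over subsets of $\{0,\dots,n-1\}$ --- rests on a false premise.

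The good news is that the repair is already latent in your last paragraph, which is where the real work happens. What is genuinely finite is each finite projection $F_s=\{a'\restriction s: a'\in\pi(\mathfrak C)\}$: this is the orbit of the \emph{finite} tuple $a\restriction s$ over $h$, hence the realization set of $\tp(a\restriction s/h)$, a bounded type-definable set of finite tuples, and for finite tuples your compactness argument is valid. So define $e_s$ directly as the imaginary code of the finite set $F_s$ and show $\Aut(\mathfrak C/a_E)=\Aut(\mathfrak C/(e_s)_s)$. One direction is as you say. For the other, if $\sigma$ fixes every $e_s$ then $\sigma(a)\restriction s\in F_s$ for every finite $s$; since every formula of $\pi=p(x)\cup E(x,a)$ mentions only finitely many coordinates and is satisfied by some element of $F$ agreeing with $\sigma(a)$ on those coordinates, $\sigma(a)\models\pi$, i.e.\ $\sigma(a)\mathrel{E}a$ and $\sigma$ fixes $a_E$. (Your ``shrinking subsets of $\{0,\dots,n-1\}$'' device should be replaced by this observation, since there is no finite index set to intersect over.) With that change the proof is correct and matches the argument in the cited sources.
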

\begin{fact}\cite[Lemma 2.18]{BPW} Let $h,e$ be hyperimaginaries with
$h\in\bdd(e)$. Then the set of $e$-conjugates of $h$ is interdefinable with
a hyperimaginary $h'$.\end{fact}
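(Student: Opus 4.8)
The plan is the following. By the Remark of Lascar and Pillay we may write $h=a_E$ for a real tuple $a$ and an equivalence relation $E$ type-definable over $\emptyset$; fix also a real tuple $c$ and a $\emptyset$-type-definable $F$ with $e=c_F$. The set of $e$-conjugates of $h$ is precisely the set $X$ of realisations of $\tp(h/e)$, and it is bounded since $h\in\bdd(e)$. The goal is to produce a hyperimaginary $h'$ with $\Aut(\mathfrak{C}/h')=\{\sigma\in\Aut(\mathfrak{C}):\sigma(X)=X\}$.

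First I would show that $X$ is type-definable over $c$. Unwinding the definitions, $b_E\in X$ if and only if there are $a',c'$ with $a'c'\equiv ac$, $c'\,F\,c$ and $a'\,E\,b$; so the union of the members of $X$ is the projection onto the $b$-coordinate of a set type-definable over $c$. This particular projection is type-definable, by a compactness argument: every finite approximation of the displayed condition on $b$ is a single formula over $c$ that is satisfied by all of $X$, so if $b$ satisfies every formula over $c$ true of $X$, then the partial type in $(a',c')$ (with parameters $b,c$) is finitely satisfiable, hence realised, and $b_E\in X$. The resulting partial type $\theta(x)$ over $c$, with $\theta(\mathfrak{C})=\{b:b_E\in X\}$, does not depend on the representative $c$, so $X$ is type-definable over $e$.

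Next, fix an enumeration $X=\{(d_i)_E:i<\mu\}$ listing each element once; since $X$ is bounded, $\mu=|X|$ is a small cardinal. For $\sigma\in\Aut(\mathfrak{C})$ one has $|\sigma(X)|=\mu$, so $\sigma(X)=X$ iff $\sigma(X)\sub X$ and $\sigma^{-1}(X)\sub X$; and $\sigma(X)\sub X$ just says that $\theta(\sigma d_i)$ holds for every $i<\mu$, likewise with $\sigma^{-1}$. Since for each $i$ the condition ``$\theta(\sigma^{\pm1}d_i)$'' is type-definable, hence closed, in $\sigma$, and there are only boundedly many of them, the setwise stabiliser $H=\{\sigma:\sigma(X)=X\}$ is a closed subgroup of $\Aut(\mathfrak{C})$. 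By the Galois correspondence for hyperimaginaries a closed subgroup of $\Aut(\mathfrak{C})$ is the pointwise stabiliser of a $\dcl^{heq}$-closed set, and any enumeration of that set is a hyperimaginary $h'$ with $\Aut(\mathfrak{C}/h')=H$; this $h'$ is interdefinable with the set of $e$-conjugates of $h$. (Since $H\supseteq\Aut(\mathfrak{C}/e)$ one even has $h'\in\dcl^{heq}(e)$.)

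I expect the third step, and especially the role of $h\in\bdd(e)$ in it, to be the crux. Type-definability of $X$ over $e$ by itself is not enough to make the setwise stabiliser of $X$ a closed subgroup: without a bound on $|X|$ the condition $\sigma(X)\sub X$ carries an external quantifier ``for all $x\in X$'', and ``$\sigma(X)\neq X$'' is in general an open, not closed, condition on $\sigma$ --- the familiar obstruction that a quotient by a type-definable equivalence relation comes with an open inequality, which is why a bounded union of $E$-classes need not be codeable outright. It is exactly the bound $|X|\le\mu$ that turns the external quantifier into the bounded conjunction over $i<\mu$, making $\sigma(X)\sub X$ the type-definable condition ``$\theta(\sigma d_i)$ for all $i<\mu$'' and completing the proof.
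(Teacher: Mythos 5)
The paper does not actually prove this statement: it is imported verbatim as a Fact with the citation \cite[Lemma 2.18]{BPW}, so there is no in-paper proof to compare against. Your argument is essentially the standard one from the cited literature and is correct in substance: type-definability over $e$ of the union $\bigcup X=\{b:b_E\in X\}$ (your projection/compactness step is fine --- projections of small partial types are type-definable in a saturated model), boundedness of $X$, and the Lascar--Pillay correspondence between hyperimaginaries and automorphism groups. The one point to tighten is your appeal to that correspondence: it is not true that \emph{every} closed subgroup of $\Aut(\mathfrak{C})$ is $\Aut(\mathfrak{C}/h')$ for a hyperimaginary $h'$; one also needs $H$ to contain $\Aut(\mathfrak{C}/A)$ for some \emph{small} $A$ (here $A=c\cup\{d_i:i<\mu\}$ works, since $H\supseteq\Aut(\mathfrak{C}/e)$), and what one actually verifies is that the orbit $H\bar a$ of the small tuple $\bar a=(c,(d_i)_{i<\mu})$ is type-definable over $\bar a$ --- which it is, being cut out by exactly the conditions $\theta(d_i',c)\wedge\theta(d_i,c')$ you isolate; the associated type-definable equivalence relation then yields $h'$ with $\Aut(\mathfrak{C}/h')=H$. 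This reformulation also locates precisely where $h\in\bdd(e)$ is used, in agreement with your closing paragraph: boundedness supplies the small tuple of representatives $(d_i)$ that turns both inclusions $\sigma(X)\subseteq X$ and $X\subseteq\sigma(X)$ into bounded conjunctions of type-definable conditions on $\sigma(\bar a)$. (The cardinality remark $|\sigma(X)|=\mu$ is not needed: $\sigma(X)\subseteq X$ and $\sigma^{-1}(X)\subseteq X$ already give $\sigma(X)=X$ by applying $\sigma$ to the second inclusion.)
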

\begin{fact}\label{bddfinitary}\cite[Theorem 4.15]{Lascar-Pillay} A
bounded hyperimaginary is interdefinable with a sequence of finitary
hyperimaginaries.\end{fact}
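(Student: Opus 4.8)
The plan is to reduce the statement to the corresponding one for Lascar strong types of \emph{finite} tuples --- which is where $G$-compactness enters --- and then to package it by coding conjugate sets as hyperimaginaries.

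First I would put $h$ into normal form: by the Remark, $h$ is interdefinable with $a_E$ for some (without loss of generality countable) tuple $a$ of imaginaries and some $\emptyset$-type-definable equivalence relation $E$ on $\tp(a)$; boundedness of $h$ means $E$ has boundedly many classes, equivalently that $\equiv^{Ls}_\emptyset$ refines $E$ on $\tp(a)$ --- indeed any automorphism of $\mathfrak C$ fixing $\bdd(\emptyset)$, hence realising $\equiv^{Ls}_\emptyset$, fixes $h$. Writing $\ell$ for the Lascar strong type of $a$ over $\emptyset$ (as a hyperimaginary), this gives $h\in\dcl^{heq}(\ell)$; and $\ell\in\bdd(h)$ trivially, so $h$ and $\ell$ are interbounded.

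Next I would prove that $\ell$ is interdefinable with the sequence $(\ell_{\bar a})_{\bar a}$, where $\bar a$ ranges over the finite subtuples of $a$ and $\ell_{\bar a}$ is the Lascar strong type of $\bar a$ over $\emptyset$. In a $G$-compact theory --- in particular a simple one --- $\equiv^{Ls}_\emptyset$ is $\emptyset$-type-definable, so each $\ell_{\bar a}$ is a genuine hyperimaginary, and it is finitary since $\bar a$ is finite; clearly $\ell_{\bar a}\in\dcl^{heq}(\ell)$, uniformly. For the converse one unwinds type-definability: $\equiv^{Ls}_\emptyset$ on $\tp(a)$ is an intersection of formulas of finite support, and each such formula, by homogeneity of $\mathfrak C$, contains the projection of $\equiv^{Ls}_\emptyset$ to its coordinates, so $\bar a\equiv^{Ls}_\emptyset\bar a'$ for all finite $\bar a$ forces $a\equiv^{Ls}_\emptyset a'$. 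Hence $h\in\dcl^{heq}(\bar e)$ for $\bar e:=(\ell_{\bar a})_{\bar a}$ a sequence of finitary bounded hyperimaginaries.

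The last step --- upgrading ``$h\in\dcl^{heq}(\bar e)$'' to interdefinability, which I expect to be the main obstacle --- uses the coding of conjugate sets. For each finite set $S$ of finite subtuples of $a$, the hyperimaginary $\bar e_S:=(\ell_{\bar a}:\bar a\in S)$ is finitary, and since $\bar e_S\in\bdd(\emptyset)\subseteq\bdd(h)$, by \cite[Lemma 2.18]{BPW} the set $X_S$ of $h$-conjugates of $\bar e_S$ is interdefinable with a hyperimaginary $f_S$; as $X_S$ is $\Aut(\mathfrak C/h)$-invariant, $f_S\in\dcl^{heq}(h)$. The key point to verify is that $f_S$ is itself finitary: writing $\bar e_S$ (via the Remark) as $\bar c_F$ for a finite tuple $\bar c$ of imaginaries and an $\emptyset$-type-definable bounded equivalence relation $F$, the set $X_S$ becomes a subset of the bounded quotient $\tp(\bar c)/F$, and any subset of such a bounded quotient of a finite type is coded by a finitary hyperimaginary. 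Finally $h\in\dcl^{heq}((f_S)_S)$: if $\sigma\in\Aut(\mathfrak C)$ fixes every $f_S$, then for each $S$ there is $\tau_S$ fixing $h$ with $\tau_S^{-1}\sigma$ fixing $\bar e_S$; letting $S$ exhaust the finite subtuples of $a$ and using saturation of $\mathfrak C$, we obtain a single $\tau$ fixing $h$ with $\tau^{-1}\sigma$ fixing every $\ell_{\bar a}$, hence fixing $\bar e$, hence --- by the previous step --- fixing $h$; so $\sigma$ fixes $h$. Therefore $h$ is interdefinable with the sequence $(f_S)_S$ of finitary hyperimaginaries.
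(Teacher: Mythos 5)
The paper does not actually prove this statement---it is imported as Fact~\ref{bddfinitary} from \cite[Theorem 4.15]{Lascar-Pillay}---so your attempt can only be measured against the known argument, which runs through the topological Galois correspondence: the compact group $\Aut(\mathfrak C)/\Aut(\mathfrak C/\bdd(\emptyset))$ is an inverse limit of its actions on the bounded quotients attached to finite tuples, a closed subgroup is the intersection of the preimages of its projections, and each such preimage is shown to be the stabiliser of a finitary bounded hyperimaginary. Your first two steps are a correct, element-level rendering of the inverse-limit half: reducing $h$ to the class of $a$ modulo a bounded type-definable equivalence relation and showing that this class is determined by the classes of the finite subtuples both check out. (One caveat: you should use the finest bounded $\emptyset$-type-definable equivalence relation $\equiv^{KP}$ rather than $\equiv^{Ls}$; the latter makes $\ell_{\bar a}$ a genuine hyperimaginary only under $G$-compactness, an assumption made neither by the Fact nor by Proposition~\ref{fin-qfin}, which uses it outside the simple context.)

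The gap is in the last step, exactly where you anticipated the main obstacle: the claim that $f_S$ is finitary. The justification offered---``any subset of such a bounded quotient of a finite type is coded by a finitary hyperimaginary''---is false as stated: an arbitrary subset of $\tp(\bar c)/F$ has a setwise stabiliser that need not even be a closed subgroup of the Galois group, so it is not coded by \emph{any} hyperimaginary. For the subsets you actually use, namely orbits of $\Aut(\mathfrak C/h)$, which are closed in the logic topology, a code exists by \cite[Lemma 2.18]{BPW}, but that lemma gives no control on finitariness: its construction yields a class of a tuple of bounded, possibly infinite, length. What you need is that the setwise stabiliser of such a closed orbit contains $\Aut(\mathfrak C/\bar d)$ for some finite tuple $\bar d$, and this is precisely the technical core of Lascar--Pillay's Theorem 4.15 (their finitary Galois correspondence); note that $\bar d=\bar c$ does not work, since an automorphism fixing $\bar c$ fixes the single point $\bar e_S$ of $X_S$ but may move $h$ and hence carry $X_S$ to a different conjugate. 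As written, the proposal assumes in one sentence the one-finite-sort case of the very statement being proved, so the argument is circular at its crucial point; the surrounding pieces ($f_S\in\dcl^{heq}(h)$, and the compactness argument recovering $h$ from $(f_S)_S$) are fine modulo standard facts about types over hyperimaginaries.
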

\begin{prop}\label{fin-qfin} If $T$ eliminates finitary hyperimaginaries, then $T$ eliminates
quasi-finitary hyperimaginaries.\end{prop}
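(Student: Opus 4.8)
The plan is to reduce the elimination of a quasi-finitary hyperimaginary to that of finitary ones, by passing to a suitable finitary parameter. Note first that it is enough to show that every quasi-finitary hyperimaginary $h$ is interdefinable with a sequence of \emph{finitary} hyperimaginaries: each of those is then eliminable by hypothesis, and a sequence of eliminable hyperimaginaries is eliminable.

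So let $h\in\bdd(a)$ with $a$ a finite tuple of imaginaries. By \cite[Lemma 2.18]{BPW} the bounded set of $a$-conjugates of $h$ is interdefinable with a hyperimaginary $h'$; as $h'$ is the canonical parameter of an $a$-definable set, $h'\in\dcl^{heq}(a)$ is finitary, and $h\in\bdd(h')$ since $h$ is one of the boundedly many elements coded by $h'$. By Lemma~\ref{elimhyp6}, $T(h')$ still eliminates finitary hyperimaginaries. Over $h'$ the hyperimaginary $h$ is now \emph{bounded}, so Fact~\ref{bddfinitary}, applied inside $T(h')$, makes $h$ interdefinable over $h'$ with a sequence of hyperimaginaries finitary over $h'$; and a hyperimaginary finitary over the finitary $h'$ is finitary over $\emptyset$, since it lies in $\dcl^{heq}(h'\bar b)\subseteq\dcl^{heq}(a\bar b)$ for a finite tuple $\bar b$ of imaginaries. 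Thus $h$ is interdefinable with a sequence of finitary hyperimaginaries \emph{over $h'$}.

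What remains — and this will be the main obstacle — is to carry out this last step over $\emptyset$ rather than over $h'$: one cannot simply adjoin $h'$ to the list, since a sub-hyperimaginary of an eliminable hyperimaginary need not be eliminable (otherwise every hyperimaginary, being a sub-hyperimaginary of a real tuple, would be eliminable). The fix is to replace $h'$ by a finitary hyperimaginary lying \emph{inside} $\dcl^{heq}(h)$ over which $h$ is still bounded; the natural candidate is $e:=\dcl^{heq}(h)\cap\dcl^{heq}(a)$, which is finitary (being contained in $\dcl^{heq}(a)$) and by construction lies in $\dcl^{heq}(h)$. Granting that $h\in\bdd(e)$, the construction above performed over $e$ — legitimate since $T(e)$ eliminates finitary hyperimaginaries by Lemma~\ref{elimhyp6} and finitary over $e$ means finitary over $\emptyset$ — gives $h$ interdefinable over $e$ with a sequence of imaginaries; since $e\in\dcl^{heq}(h)$ and $e$ is itself eliminable, $h$ is then interdefinable with a sequence of imaginaries, i.e.\ eliminable. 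The crux is therefore the inclusion $h\in\bdd\bigl(\dcl^{heq}(h)\cap\dcl^{heq}(a)\bigr)$, which I would establish by controlling the orbit of $h$ over $\dcl^{heq}(h)\cap\dcl^{heq}(a)$ through the behaviour of relative Lascar strong types — the guiding case being $h=\Lstp(x/a)$, recovered from its finite restrictions $\Lstp(\bar x/a)$ with $\bar x\subseteq x$ finite — together with the boundedness of the orbit of $h$ over $a$ already used above.
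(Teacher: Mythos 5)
Your overall skeleton matches the paper's --- code a conjugate set of $h$ to get a finitary hyperimaginary with $h$ in its bounded closure, apply Fact~\ref{bddfinitary} and Lemma~\ref{elimhyp6} over that parameter, and then descend to $\emptyset$ --- and you correctly locate the obstacle in the descent step. But the fix you propose does not work: the crux you defer, namely $h\in\bdd\bigl(\dcl^{heq}(h)\cap\dcl^{heq}(a)\bigr)$, is false in general, and no argument about relative Lascar strong types will rescue it. Take $T$ the theory of an infinite pure set, $h=x$ a point and $a=\{x,y\}$ the unordered pair (an imaginary): then $h\in\acl^{eq}(a)$, but the group generated by $\Aut(\mathfrak{C}/x)$ together with an automorphism swapping $x$ and $y$ is all of $\Aut(\mathfrak{C})$, so $\dcl^{heq}(h)\cap\dcl^{heq}(a)=\dcl^{heq}(\emptyset)$ while $h\notin\bdd(\emptyset)$. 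So your candidate $e$ can be far too small, and your argument gives no way of choosing a better $a$.

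The paper repairs exactly this point with a device you are missing: it first picks a conjugate $a'\equiv_h a$ with $\bdd(a)\cap\bdd(a')=\bdd(h)$, and lets $h'$ code the set of $aa'$-conjugates of $h$ (rather than the $a$-conjugates). This $h'$ is still finitary and still has $h\in\bdd(h')$, but now all the $aa'$-conjugates of $h$ lie in $\bdd(a)\cap\bdd(a')$, so $h'\in\bdd(h)$: one obtains \emph{interboundedness} of $h$ with $h'$, not membership in $\dcl^{heq}(h)$. That weaker relation suffices, because the final assembly goes through Fact~\ref{elimhyp1}, which only requires a sequence $ee'$ of imaginaries with $h\in\dcl^{heq}(ee')$ and $ee'\in\acl^{eq}(h)$; your insistence on $e\in\dcl^{heq}(h)$ is both unattainable and unnecessary. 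If you replace your $e$ by the code of the $aa'$-conjugate set for such an $a'$ and conclude via Fact~\ref{elimhyp1} instead of via eliminability of $e$, your argument becomes the paper's.
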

\begin{proof} Let $h$ be a quasi-finitary hyperimaginary and let $a$ be a finite tuple of
imaginaries such that $h\in\bdd(a)$. Consider $a'\equiv_ha$ with
$\bdd(a)\cap\bdd(a')=\bdd(h)$. Let $h'$ be the hyperimaginary
corresponding to the set of $aa'$-conjugates of $h$. Then $h'$ is
$aa'$-invariant, and hence finitary. It is thus interdefinable with a
sequence $e$ of imaginaries.

On the other hand, $h\in\bdd(a)\cap\bdd(a')$, as are all its
$aa'$-conjugates. Thus $h'\in\bdd(a)\cap\bdd(a')=\bdd(h)$. Hence
$e\in\acl^{eq}(h)$ and $h\in\bdd(h')=\bdd(e)$. By Fact
\ref{bddfinitary}, there is a sequence $h''$ of finitary
hyperimaginaries interdefinable with $h$ over $e$. By Lemma \ref{elimhyp6}
and elimination of finitary hyperimaginaries we see that $h''$ is interdefinable over
$e$ with a sequence $e'$ of imaginaries. So $h\in\dcl^{heq}(ee')$ and
$e'\in\dcl^{eq}(eh)$. Moreover, $ee'\in\acl^{eq}(h)$ since
$e\in\acl^{eq}(h)$. Hence $h$ is eliminable by Fact
\ref{elimhyp1}.\end{proof}

The following remarks and lemmata will need $G$-compactness.
\begin{remark}\label{elimhyp2} Let $T$ be $G$-compact over a set $A$.
The following are equivalent: \begin{enumerate} \item $a\equiv^{Ls}_A
b$ iff $a\equiv^{s}_A b$ for all sequences $a,b$. \item
$\Aut(\mathfrak{C}/\bdd(A))=\Aut(\mathfrak{C}/\acl^{eq}(A))$. \item
$\bdd(A)=\dcl^{heq}(\acl^{eq}(A))$.
\end{enumerate}\end{remark}
\begin{proof} Easy exercise.\end{proof}
\begin{remark}\label{elimhyp3} Let $T$ be a $G$-compact theory and
assume further that $a\equiv^{Ls}_A b\Leftrightarrow a\equiv^{s}_A b$
for all sequences $a,b$ and for any set $A$. Let now $h$ be a
hyperimaginary and let $e$ be a sequence of imaginaries such that $h$
and $e$ are interbounded. Then $h$ is eliminable.\end{remark}
\begin{proof} It follows from Remark \ref{elimhyp2} that
$\bdd(e)=\dcl^{heq}(\acl^{eq}(e))$. Fix an enumeration $\bar{e}$ of
$\acl^{eq}(e)$ and observe that $h\in \dcl^{heq}(\bar{e})$ and
$\bar{e}\in \bdd(h)$. Then apply Fact \ref{elimhyp1} to eliminate
$h$.\end{proof}

It turns out for $G$-compact theories that elimination of hyperimaginaries can be decomposed as weak elimination of hyperimaginaries plus the equality between Lascar strong types and strong types over parameter sets.
\begin{fact}\label{elimhyp5}\cite[Proposition
18.27]{Enrique-prebook} Assume that $T$ is $G$-compact. Then $T$
eliminates all bounded hyperimaginaries iff
$a\equiv^{Ls}b\Leftrightarrow a\equiv^{s}b$ for all sequences $a$,
$b$.\end{fact}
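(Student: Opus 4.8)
The plan is to run both implications in parallel by translating, through Remark~\ref{elimhyp2} applied with $A=\emptyset$ (legitimate since the $G$-compact $T$ is in particular $G$-compact over $\emptyset$), between the statement ``$a\equiv^{Ls}b\Leftrightarrow a\equiv^{s}b$ for all sequences'' and the equality $\bdd(\emptyset)=\dcl^{heq}(\acl^{eq}(\emptyset))$.

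For the forward direction, assume $T$ eliminates bounded hyperimaginaries and take an arbitrary $h\in\bdd(\emptyset)$; I claim $h\in\dcl^{heq}(\acl^{eq}(\emptyset))$. Indeed, $h$ is a bounded hyperimaginary, hence eliminable, so interdefinable with a sequence $\bar e$ of imaginaries. Since $h\in\bdd(\emptyset)$ and $\bdd(\emptyset)$ is closed under $\dcl^{heq}$, every coordinate of $\bar e$ lies in $\bdd(\emptyset)$; an imaginary with a small orbit over $\emptyset$ has only finitely many conjugates, so $\bar e\subseteq\acl^{eq}(\emptyset)$ and $h\in\dcl^{heq}(\bar e)\subseteq\dcl^{heq}(\acl^{eq}(\emptyset))$. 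As the reverse inclusion $\dcl^{heq}(\acl^{eq}(\emptyset))\subseteq\bdd(\emptyset)$ is automatic, Remark~\ref{elimhyp2} delivers the equality of Lascar strong types and strong types over $\emptyset$.

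For the converse, assume $a\equiv^{Ls}b\Leftrightarrow a\equiv^{s}b$ for all sequences, so that Remark~\ref{elimhyp2} gives $\bdd(\emptyset)=\dcl^{heq}(\acl^{eq}(\emptyset))$. Given a bounded hyperimaginary $h$, fix an enumeration $\bar e$ of $\acl^{eq}(\emptyset)$; this is a small tuple (of length at most $|T|$), so $\bar e\subseteq\acl^{eq}(\emptyset)\subseteq\bdd(\emptyset)\subseteq\bdd(h)$, while $h\in\bdd(\emptyset)=\dcl^{heq}(\acl^{eq}(\emptyset))=\dcl^{heq}(\bar e)$. Fact~\ref{elimhyp1} then shows $h$ is eliminable --- precisely the manoeuvre of Remark~\ref{elimhyp3}. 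I do not expect a genuine obstacle: the mathematical content sits in Remark~\ref{elimhyp2} and Fact~\ref{elimhyp1}, and the only care needed is the bookkeeping around boundedness (a bounded imaginary is algebraic; an enumeration of $\acl^{eq}(\emptyset)$ is small) and the explicit appeal to $G$-compactness so that Remark~\ref{elimhyp2} is available over $\emptyset$.
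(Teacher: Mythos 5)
Your proof is correct and follows essentially the same route as the paper: both directions are mediated by Remark~\ref{elimhyp2} over $A=\emptyset$, and the converse is exactly the paper's manoeuvre of enumerating $\acl^{eq}(\emptyset)$ and applying Fact~\ref{elimhyp1} (i.e.\ Remark~\ref{elimhyp3}). The only cosmetic difference is that for the forward direction you pass through condition (3) of Remark~\ref{elimhyp2} (spelling out why the eliminating imaginaries are algebraic), whereas the paper passes through condition (2) on automorphism groups; these are equivalent by that same remark.
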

\begin{proof} The proof in \cite{Enrique-prebook} is
nice and intuitive; however, we will give another one using Remark
\ref{elimhyp3}. If $T$ eliminates bounded hyperimaginaries, then
$\Aut(\mathfrak{C}/\bdd(\emptyset))=\Aut(\mathfrak{C}/\acl^{eq}(\emptyset))$.
By Remark \ref{elimhyp2} we get $\Lstp=\stp$. For the other
direction, let $e\in \bdd(\emptyset)$ and let $\bar{a}$ be an
enumeration of $\acl^{eq}(\emptyset)$. It is clear that $e$ and
$\bar{a}$ are interbounded. By Remark \ref{elimhyp3}, $e$ is
eliminable.\end{proof}
\begin{lem}\label{elimhyp4} Suppose $T$ is
$G$-compact and assume further that $T$ eliminates finitary
hyperimaginaries. Then $a\equiv^{Ls}_A b$ iff $a\equiv^{s}_A b$ for
all sequences $a,b$ and for any set $A$.\end{lem}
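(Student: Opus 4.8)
The plan is to reformulate the conclusion as: $a\equiv^s_A b$ implies $a\equiv^{Ls}_A b$, for every $A$ and all (possibly infinite) tuples $a,b$. Since $T$ is $G$-compact, $\equiv^{Ls}_A$ coincides with equality of type over $\bdd(A)$, so it is determined by finite subtuples, as is $\equiv^s_A$; hence it suffices to treat finite tuples $a,b$. I would then prove the statement in two stages: first over a \emph{finite} parameter set, and then transfer to an arbitrary parameter set by a compactness argument.

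Over a finite tuple $a_0$: by Fact \ref{bddfinitary} applied inside the ($G$-compact) theory $T(a_0)$, every $h\in\bdd(a_0)$ is interdefinable over $a_0$ with a sequence of hyperimaginaries finitary over $a_0$; since $a_0$ is finite each of these is finitary over $\emptyset$, hence eliminable by hypothesis, and so interdefinable over $a_0$ with a sequence of imaginaries; being bounded over $a_0$ those imaginaries lie in $\acl^{eq}(a_0)$. Therefore $\bdd(a_0)=\dcl^{heq}(\acl^{eq}(a_0))$, and Remark \ref{elimhyp2} gives $\equiv^{Ls}_{a_0}=\equiv^s_{a_0}$.

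Now let $A$ be arbitrary and $a\equiv^s_A b$ with $a,b$ finite, so $a\equiv_{\acl^{eq}(A)}b$. As every algebraic formula uses finitely many parameters, $\acl^{eq}(A)=\bigcup\{\acl^{eq}(a_0):a_0\sub A\text{ finite}\}$, hence $a\equiv^s_{a_0}b$, and by the previous step $a\equiv^{Ls}_{a_0}b$, for every finite $a_0\sub A$. Using that $G$-compactness bounds the Lascar distance over any set by a number $n$ depending only on the sorts of $a,b$, for each finite $a_0\sub A$ one finds a chain $a=z_0,\dots,z_n=b$ together with $a_0$-indiscernible sequences $I_0,\dots,I_{n-1}$ whose first two terms are $z_j,z_{j+1}$ (padding with constant sequences to get length exactly $n$). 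The partial type over $A\cup\{a,b\}$ asserting the existence of such a chain $z_0,\dots,z_n$ together with \emph{$A$}-indiscernible sequences $I_0,\dots,I_{n-1}$ is then finitely satisfiable: any finite fragment mentions only finitely many parameters of $A$, all lying in some finite $a_0\sub A$, and is realized by the configuration just produced for that $a_0$. A realization of this type witnesses $a\equiv^{Ls}_A b$, and the reduction to finite tuples yields the statement in general.

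The crux is the compactness step, and specifically the need for the bound $n$ on Lascar distance to be \emph{uniform} in the finite parameter set, so that the length of the chain can be fixed in advance; this is where $G$-compactness enters essentially, the remainder being the reductions of Fact \ref{bddfinitary}, Remark \ref{elimhyp2}, and the elimination of finitary hyperimaginaries.
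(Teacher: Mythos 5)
Your route is essentially the paper's: reduce to a finite parameter set $a_0$, and there combine Fact \ref{bddfinitary} (applied over $a_0$) with elimination of finitary hyperimaginaries to get $\bdd(a_0)=\dcl^{heq}(\acl^{eq}(a_0))$, concluding via Remark \ref{elimhyp2}. The paper routes the finite case through Lemma \ref{elimhyp6} and Fact \ref{elimhyp5}, but the content is the same, and your observation that a hyperimaginary finitary over a finite tuple is already finitary over $\emptyset$ lets you bypass Lemma \ref{elimhyp6} cleanly. That part of your argument is correct.

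The step that needs more care is exactly the one you flag as the crux: the \emph{uniform} bound $n$ on Lascar distance. $G$-compactness as defined here says that $\equiv^{Ls}_A$ is type-definable over $A$ for each $A$; for a fixed sort this yields a finite bound $n_A$ on the Lascar diameter, but a priori $n_A$ depends on $A$, and your compactness argument needs a single $n$ serving all finite $a_0\subseteq A$ at once. The assertion that $n$ depends ``only on the sorts of $a,b$'' is not handed to you by the definition, and the supremum of the $n_{a_0}$ over the infinitely many finite subsets of $A$ could in principle be infinite, in which case the chain length in your partial type cannot be fixed in advance. To be fair, the paper dispatches the entire reduction with the clause ``since $T$ is $G$-compact'', so it is no more explicit than you are; but to make your version airtight you should either prove the uniformity or observe that in the intended application the theory is simple, where the Lascar distance over any set is at most $2$ (connect $a$ and $b$ through a realization of their common Lascar strong type independent from both, each pair beginning a Morley sequence), so that $n=2$ works uniformly and your compactness step goes through.
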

\begin{proof} Since
$T$ is $G$-compact, it is enough to check the condition for finite
$A$. But then $T(A)$ eliminates finitary hyperimaginaries by Remark
\ref{elimhyp6}, and hence all bounded hyperimaginaries by Fact
\ref{bddfinitary}. Now applying Fact \ref{elimhyp5} we obtain that
$a\equiv^{Ls}b$ iff $a\equiv^{s} b$ in $T(A)$.\end{proof}

\section{Elimination of hyperimaginaries in simple
theories}

In this section $T$ will be a simple theory. Recall that the {\em canonical base} of $a$ over $b$, denoted $\cb(a/b)$, is the smallest definably closed subset $C$ of $\bdd(b)$ such that $a\ind_Cb$ and $\tp(a/C)$ is Lascar strong.

\begin{lem}\label{elimhyp7} For any $a$ and any $h\in\bdd(c)$ we have $\cb(a/h)\subseteq\dcl(ac)\cap\bdd(h)$. Therefore, the canonical base of the type of an imaginary finite tuple over a quasi-finitary hyperimaginary is finitary. Furthermore, if $b\in\cb(a/c)$ then $\dcl(ab)\cap\bdd(b)\subseteq\cb(a/c)$. In particular, if $c\in\dcl(a)$ then $\cb(a/c)=\dcl(a)\cap\bdd(c)$. \end{lem}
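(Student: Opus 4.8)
The plan is to prove the three assertions in turn, exploiting the fact that a canonical base is always a definably closed subset of the bounded closure of its base set over which the type becomes Lascar strong and independent.

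First, for $\cb(a/h)\subseteq\dcl(ac)\cap\bdd(h)$: write $C=\cb(a/h)$. By definition $C\subseteq\bdd(h)$, so the content is to show $C\subseteq\dcl(ac)$. The standard argument is that $C=\cb(a/h)$ is the definable closure of $\cb(a/D)$ for any small $D\models\tp(h/C)$-Morley sequence, and each such $D$ can be taken inside $\bdd(c)$-conjugates, hence inside $\dcl(ac)^{heq}$ once we note $a\ind_C h$ gives $a\ind_C c$-type information; more directly, since $h\in\bdd(c)$, every $C$-invariant piece of $\tp(a/h)$ is already computed from $\tp(a/c)$, and the canonical base of $\tp(a/c)$ — being the canonical base of a type over imaginaries — lies in $\dcl(ac)$. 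Then $C=\cb(a/h)\subseteq\cb(a/c)\subseteq\dcl(ac)$, and combined with $C\subseteq\bdd(h)$ we get the inclusion. The ``Therefore'' is immediate: if $c$ is a finite tuple of imaginaries then $\dcl(ac)$ is generated by a finite tuple of imaginaries, so $\dcl(ac)\cap\bdd(h)$ is quasi-finitary, and being definably closed inside it (it is a canonical base) and bounded over a quasi-finitary hyperimaginary it is in fact finitary by the argument of Lemma~\ref{elimhyp6} / the definition of finitary.

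Second, for the statement that $b\in\cb(a/c)$ implies $\dcl(ab)\cap\bdd(b)\subseteq\cb(a/c)$: put $C=\cb(a/c)$ and suppose $b\in C$. Apply the first part with $h$ replaced by $b$: since $b\in\bdd(c)$ and $a\ind_Cc$ with $b\in C$, we have $a\ind_bc$ is false in general, but what we do have is $a\ind_C c$, and we want $\cb(a/b)\supseteq$? Instead the cleaner route: by the first part $\cb(a/b)\subseteq\dcl(ab)\cap\bdd(b)$; conversely $\dcl(ab)\cap\bdd(b)$ is a definably closed subset of $\bdd(b)$, and one checks $a\ind_{\dcl(ab)\cap\bdd(b)}b$ and that this type is Lascar strong — because any extra information over $\dcl(ab)\cap\bdd(b)$ inside $\bdd(b)$ would be in $\dcl(ab)$, contradiction — so $\dcl(ab)\cap\bdd(b)\subseteq\cb(a/b)$, giving equality $\cb(a/b)=\dcl(ab)\cap\bdd(b)$. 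Then since $b\in C=\cb(a/c)$, transitivity of canonical bases gives $\cb(a/b)\subseteq\cb(a/c)$, hence $\dcl(ab)\cap\bdd(b)\subseteq\cb(a/c)$.

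Finally, the ``In particular'': if $c\in\dcl(a)$ then trivially $a\ind_{\dcl(a)\cap\bdd(c)}c$ (as $c$ is in the base on the right, the extension is trivial) and $\tp(a/\dcl(a)\cap\bdd(c))$ is Lascar strong since $\dcl(a)\cap\bdd(c)$ is definably closed and already ``sees'' $c\in\dcl(a)$, so $\dcl(a)\cap\bdd(c)\supseteq\cb(a/c)$; the reverse inclusion is the first part applied with $h=c$ (noting $c\in\bdd(c)$ and $c\in\dcl(a)$ so $\dcl(ac)=\dcl(a)$). Hence $\cb(a/c)=\dcl(a)\cap\bdd(c)$.

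I expect the main obstacle to be the first inclusion $\cb(a/h)\subseteq\dcl(ac)$: one must pass from the hyperimaginary $h$ to the imaginary tuple $c$ cleanly, and the safe way is probably to use that $\cb(a/h)$ is the definable closure of the set of canonical bases $\cb(a/h_i)$ along a Morley sequence $(h_i)$ in $\tp(h/\cb(a/h))$ together with the fact that each $h_i\in\bdd(c_i)$ for a conjugate $c_i$ of $c$, reducing to the known behaviour of canonical bases of real types over imaginary parameters; care is needed that everything stays inside $\dcl(ac)$ rather than merely $\bdd(ac)$.
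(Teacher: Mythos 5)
Your decomposition into the three assertions is right, but the first two both rest on steps that are false in general, and in a way that matters for this paper. For the first inclusion you reduce to $\cb(a/h)\subseteq\cb(a/c)\subseteq\dcl(ac)$. The second containment is fine, but the first is unconditional monotonicity of canonical bases for $h\in\bdd(c)$, and that is exactly what fails in general: CM-triviality is precisely the assertion that $\cb(a/A)\subseteq\bdd(\cb(a/B))$ for $A\sub B$ \emph{under the extra hypothesis} $\bdd(aA)\cap\bdd(B)=\bdd(A)$, and if your inclusion held for free that notion would be vacuous and the paper's main theorem trivial. The paper's route does not pass through $\cb(a/c)$ at all: since $h\in\bdd(c)$, the relation $\equiv^{Ls}_c$ refines $\equiv^{Ls}_h$, so the class of $a$ modulo $\equiv^{Ls}_h$ is definable from the class of $a$ modulo $\equiv^{Ls}_c$, which lies in $\dcl(ac)$, and $\cb(a/h)$ is definable from the former class. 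The point is that $a$ itself carries the information; $\cb(a/h)$ need not sit anywhere near $\cb(a/c)$.

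For the second assertion there are two errors. Your minimality argument is backwards: verifying that $D=\dcl(ab)\cap\bdd(b)$ is a definably closed subset of $\bdd(b)$ with $a\ind_D b$ and $\tp(a/D)$ Lascar strong proves $\cb(a/b)\subseteq D$, since the canonical base is the \emph{smallest} such set; it cannot prove $D\subseteq\cb(a/b)$. And indeed the equality $\cb(a/b)=\dcl(ab)\cap\bdd(b)$ is false for a general $b\in\cb(a/c)$: in $\mathrm{ACF}_0$ let $a$ be a generic point of the line $y=sx+t$ and $c=(s,t)$, so that $s\in\cb(a/c)=\dcl(s,t)$; then $a\ind s$, whence $\cb(a/s)\subseteq\bdd(\emptyset)$, yet $s\in\dcl(as)\cap\bdd(s)$. (This also explains why the lemma puts $\cb(a/c)$, not $\cb(a/b)$, on the right-hand side.) The paper's argument avoids $\cb(a/b)$ entirely: it writes $b'=\dcl(ab)\cap\bdd(b)$ as the class of $a$ modulo an equivalence relation type-definable over $b$, observes that this class is bounded over $\cb(a/c)$ because $b\in\cb(a/c)$ and $b'\in\bdd(b)$, and concludes $b'\in\cb(a/c)$ because $\tp(a/\cb(a/c))$ is Lascar strong. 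Your appeal to ``transitivity of canonical bases'' would also need justification. The ``in particular'' clause is fine in outline --- $c\in\dcl(a)$ gives $c\in\cb(a/c)$, then combine the two previous parts --- but as written it inherits both gaps.
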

\begin{proof} Since $h\in\bdd(c)$, equality of Lascar strong types over $c$ refines equality of Lascar strong types over $h$, and the class of $a$ modulo the former is clearly in $\dcl(ac)$. So the class of $a$ modulo the latter is in $\dcl(ac)$, and $\cb(a/h)\in\dcl(ac)\cap\bdd(h)$. As a consequence, if $a$ is a finite tuple and $h$ is a quasi-finitary hyperimaginary bounded over some finite tuple $c$, then $\cb(a/h)$ is definable over the finite tuple $ac$.

For the second assertion put $b'=\dcl(ab)\cap\bdd(b)$. Since $b'\in\dcl(ab)$ there is an equivalence relation $E$ on $\tp(a/b)$ type-definable over $b$ such that $b'$ is interdefinable over $b$ with $a_E$. As $b'\in\bdd(b)$ and $b\in\cb(a/c)$, the $E$-class of $a$ is bounded over $\cb(a/c)$; as $\tp(a/\cb(a/c))$ is Lascar-strong, $a_E\in\cb(a/c)$.

The ``in particular'' clause is essentially \cite[Remark 3.8]{BPW}:
If $c\in\dcl(a)$ then clearly $c\in\cb(a/c)$; the assertion follows.
\end{proof}

Recall the definition of CM-triviality.
\begin{defi}A simple theory
$T$ is {\em CM-trivial} if for every tuple $a$ and for any sets
$A\sub B$ with $\bdd(aA)\cap\bdd(B)=\bdd(A)$ we have
$\cb(a/A)\sub\bdd(\cb(a/B))$.\end{defi}

\begin{remark} As in \cite[Corollary 2.5]{pillay}, in the definition of CM-triviality we may take $A\sub B$ to be models of the ambient theory and $a$ to be a tuple from the home sort. Therefore, it makes no difference in the definition of CM-triviality whether we consider hyperimaginaries or just imaginaries.\end{remark}

Now we characterize canonical bases in simple CM-trivial theories in terms of finitary hyperimaginaries.

\begin{prop}\label{lem:cb-CM} Assume the theory is simple CM-trivial. If $a$ is a finite imaginary tuple, then
$$\bdd(\cb(a/B))=\bdd(\cb(a/b):b\in X),$$
where $X$ is the set of all finitary $b\in\bdd(\cb(a/B))$.\end{prop}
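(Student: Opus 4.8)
Write $c:=\cb(a/B)$ and let $d$ be a hyperimaginary interdefinable with the sequence $(\cb(a/b):b\in X)$; the claim is $\bdd(c)=\bdd(d)$. The inclusion $\bdd(d)\subseteq\bdd(c)$ is immediate from Lemma~\ref{elimhyp7}: if $b\in X$ then $b\in\bdd(c)$, so $\cb(a/b)\subseteq\dcl(ab)\cap\bdd(b)\subseteq\bdd(c)$, whence $d\in\bdd(c)$. The work lies in $\bdd(c)\subseteq\bdd(d)$.

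First I would reduce to a statement about canonical bases over a Morley sequence. Recall that $\tp(a/c)$ is Lascar strong and $c=\cb(a/c)$. Fix a Morley sequence $I=(a_i:i<\omega)$ in $\tp(a/c)$; since the canonical base of a Lascar strong type lies in the bounded closure of an infinite Morley sequence of it, $c\in\bdd(I)$, and since every tail of $I$ is again a Morley sequence over $c$ whose bounded closure contains $c$, minimality of canonical bases gives $c=\cb(a_0/(a_i:i\ge1))$. For $n\ge1$ put $c_n:=\cb(a_0/a_1\cdots a_n)$. By Lemma~\ref{elimhyp7} each $c_n$ is finitary, and by monotonicity of canonical bases (applied to $\{a_1,\dots,a_n\}\subseteq(a_i:i\ge1)$) we get $c_n\subseteq\bdd(c)$, so $c_n\in X$. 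Moreover $a\equiv^{Ls}_c a_0$, hence $a\equiv_{\bdd(c_n)}a_0$ as $c_n\in\bdd(c)$, so $\cb(a/c_n)=\cb(a_0/c_n)=c_n$; thus each $c_n$ occurs, up to interdefinability, among the $\cb(a/b)$ with $b\in X$, giving $\bdd(c_n)\subseteq\bdd(d)$. Therefore it suffices to prove
$$c\subseteq\bdd(c_n:n\ge1).$$

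The remaining step — showing that the canonical base of $a_0$ over the \emph{infinite} Morley sequence $(a_i:i\ge1)$ is already bounded over the canonical bases $c_n=\cb(a_0/a_1\cdots a_n)$ over its \emph{finite} initial segments — is a form of ``finite character of canonical bases'' which genuinely fails for infinitary hyperimaginaries in an arbitrary simple theory, and it is here that CM-triviality must enter; I expect this to be the main obstacle. The plan is to feed the defining inequality of CM-triviality, $\cb(x/A)\subseteq\bdd(\cb(x/A'))$ whenever $A\subseteq A'$ and $\bdd(xA)\cap\bdd(A')=\bdd(A)$, into the Morley array: the mutual independence of disjoint tails of $I$ over $c$ is exactly what makes the intersection hypothesis $\bdd(xA)\cap\bdd(A')=\bdd(A)$ hold for suitable choices of a small set $A$ (an initial segment $a_1\cdots a_n$ together with a bounded amount of extra data) and a longer segment $A'$, so that the CM-triviality conclusion forces any part of $c$ lying outside $\bigcup_n\bdd(c_n)$ to be recovered already at a finite stage, contradiction unless $c\subseteq\bdd(c_n:n\ge1)$. (Alternatively one may reduce, via finite character of forking, to the finitariness of $\cb(a/dB_0)$ for finite $B_0\subseteq B$: such a set equals its own canonical base and sits in $\bdd(c)$, so if it were finitary it would belong to $X$ and hence to $\bdd(d)$ — the obstacle being, once more, to extract that finitariness from CM-triviality.) Combining the key step with the easy inclusion yields $\bdd(c)=\bdd(d)$.
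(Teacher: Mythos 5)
Your easy inclusion is correct, but the hard direction has two genuine gaps. First, the reduction itself rests on an invalid step: you claim $c_n=\cb(a_0/a_1\cdots a_n)\subseteq\bdd(c)$ ``by monotonicity of canonical bases''. There is no such monotonicity: for $A\subseteq B$ the containment $\cb(a/A)\subseteq\bdd(\cb(a/B))$ is precisely the \emph{conclusion} of CM-triviality, available only under the hypothesis $\bdd(aA)\cap\bdd(B)=\bdd(A)$, and failing in general otherwise. In your Morley-sequence situation that hypothesis, namely $\bdd(a_0a_1\cdots a_n)\cap\bdd(a_i:i\ge1)=\bdd(a_1\cdots a_n)$, is not available (nothing prevents part of $c$ from lying in $\bdd(a_0\cdots a_n)$ without lying in $\bdd(a_1\cdots a_n)$), so you cannot even invoke CM-triviality to place $c_n$ in $X$. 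Second, and more seriously, the step you defer --- $c\subseteq\bdd(c_n:n\ge1)$ --- is the entire content of the proposition: you have reformulated the problem (recover an infinitary canonical base from finitary approximations) rather than solved it, and the closing paragraph describing how CM-triviality ``should'' enter is a plan, not an argument.

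For comparison, the paper's proof avoids Morley sequences entirely. For each $b\in X$ it picks a real tuple $\hat b$ with $\cb(a/b)\in\dcl(\hat b)$, chosen so that $(\hat b:b\in X)\ind_{(\cb(a/b):b\in X)}aB$, and shows $a\ind_{(\hat b:b\in X)}B$ directly: if not, some formula $\varphi(x,b')$ with $b'$ a finite tuple from $B\cup\{\hat b:b\in X\}$ divides over $(\hat b:b\in X)$; setting $\bar b=\bdd(ab')\cap\bdd(B,\hat b:b\in X)$ makes the CM-triviality hypothesis hold \emph{by construction}, so $\cb(a/\bar b)\subseteq\bdd(\cb(a/B))$, and Lemma~\ref{elimhyp7} shows $\cb(a/\bar b)$ is finitary, hence lies in $X$; since $b'\in\bar b$ and $a\ind_{\cb(a/\bar b)}\bar b$, the formula cannot divide over $(\hat b:b\in X)$, a contradiction, whence $\cb(a/B)\subseteq\bdd(\cb(a/b):b\in X)$. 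The two devices you are missing are exactly (i) defining the small base of the CM-triviality instance as an intersection $\bdd(ab')\cap\bdd(\cdots)$ so that the hypothesis holds automatically, and (ii) using Lemma~\ref{elimhyp7} to certify that the resulting canonical base is finitary. Your parenthetical alternative (finite character of forking plus finitariness of $\cb(a/dB_0)$) is close in spirit to the paper's route, but it stops precisely where these two devices are needed.
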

\begin{proof} Since $\cb(a/b)\subseteq\bdd(b)\subseteq\bdd(\cb(a/B))$ for $b\in X$, we have
$$\bdd(\cb(a/b):b\in X)\subseteq\bdd(\cb(a/B)).$$
For the reverse inclusion, for every $b\in X$ let $\hat b$ be a real tuple with $\cb(a/b)\in\dcl(\hat b)$; we choose them such that
$$(\hat b:b\in X)\ind_{(\cb(a/b):b\in X)} aB,$$
whence $(\hat b:b\in X)\ind_B a$.

Now, if $a\nind_{(\hat b:b\in X)} B$ then there is a finite tuple $b'\in B\cup\{\hat b:b\in X\}$ and a formula $\varphi(x,b')\in\tp(a/B,\hat b:b\in X)$ which divides over $(\hat b:b\in X)$. Put $\bar b=\bdd(ab')\cap\bdd(B,\hat b:b\in X)$. Then $\bar b$ is a quasi-finitary hyperimaginary, and by CM-triviality
$$\cb(a/\bar b)\subseteq\bdd(\cb(a/B,\hat b:b\in X))=\bdd(\cb(a/B)).$$
Since $\cb(a/\bar b)$ is finitary by Lemma \ref{elimhyp7}, it belongs to $X$. Note that $b'\in\bar b$; but $a\ind_{\cb(a/\bar b)} \bar b$, so $\varphi(x,b')$ cannot divide over $\cb(a/\bar b)$, and even less over $(\hat b:b\in X)$ as this contains $\widehat{\cb(a/\bar b)}$. Thus, $a\ind_{(\hat b:b\in X)} B$, whence $a\ind_{(\cb(a/b):b\in X)} B$ by transitivity. Therefore $$\cb(a/B)\subseteq\bdd(\cb(a/b):b\in X).$$
\end{proof}

\begin{quest} The same proof will work without assuming CM-triviality if for every finite tuple $b\in B$ there is some quasi-finitary hyperimaginary $\bar b\in\bdd(B)$ with $b\in\dcl(\bar b)$ such that $\cb(a/\bar b)\subseteq\bdd(\cb(a/B))$.
Is this true in general?\end{quest}

We can now state (and prove) the main result.
\begin{teo}\label{mainteo} Let $T$ be a simple CM-trivial theory. Then every hyperimaginary is interbounded with a sequence of finitary hyperimaginaries.\end{teo}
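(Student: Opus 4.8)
The plan is to show that $h$ is interbounded with the family of canonical bases of the finite ``coordinates'' of a presentation of $h$, and then to break each such canonical base into finitary pieces by means of Proposition~\ref{lem:cb-CM} and Lemma~\ref{elimhyp7}.

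First I would fix a sequence $\bar c=(a_i:i\in I)$ of finite imaginary tuples with $h\in\dcl^{heq}(\bar c)$ — this is possible since every hyperimaginary is the class of a sequence of imaginaries modulo a $\emptyset$-type-definable equivalence relation. For each finite $J\sub I$ set $a_J=(a_i:i\in J)$, a finite imaginary tuple, put $g_J=\cb(a_J/h)$, and let $G=\dcl^{heq}(g_J:J\sub I\text{ finite})$. Since each $g_J\sub\bdd(h)$, also $G\sub\bdd(h)$; the first key claim is that conversely $h\in\bdd(G)$, so that $h$ and $G$ are interbounded. To see this: from $a_J\ind_{g_J}h$, and the fact that $G$ and $h$ lie in $\bdd(h)$, one gets $a_J\ind_{g_J}Gh$ (adding data bounded over $h$ creates no forking), hence $a_J\ind_Gh$ by transitivity since $g_J\sub G$; as this holds for every finite $J$ we obtain $\bar c\ind_Gh$, and then $h\ind_Gh$ because $h\in\dcl^{heq}(\bar c)$, i.e.\ $h\in\bdd(G)$.

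Next I would replace each $g_J$ by finitary hyperimaginaries. Applying Proposition~\ref{lem:cb-CM} to the finite imaginary tuple $a_J$ over $h$ (equivalently over $\bdd(h)$, which leaves the canonical base unchanged), and writing $X_J$ for the set of finitary $b\in\bdd(g_J)$, yields
$$\bdd(g_J)=\bdd\big(\cb(a_J/b):b\in X_J\big).$$
Each $b\in X_J$ is finitary, hence quasi-finitary, so $\cb(a_J/b)$ is finitary by Lemma~\ref{elimhyp7}. Combining over all finite $J$,
$$\bdd(G)=\bdd\big(g_J:J\sub I\text{ finite}\big)=\bdd\big(\cb(a_J/b):J\sub I\text{ finite},\ b\in X_J\big),$$
which is the bounded closure of a sequence of finitary hyperimaginaries; since $h$ and $G$ are interbounded, this sequence is as required.

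I expect the main difficulty to be conceptual rather than computational: recognising that $h$ is already determined, up to boundedness, by the canonical bases $\cb(a_J/h)$ of the finite parts of a presentation, and that — even though $h$ is not quasi-finitary, so the $g_J$ need not be finitary — CM-triviality (through Proposition~\ref{lem:cb-CM}) forces each $g_J$ to be generated up to boundedness by canonical bases over genuinely finitary hyperimaginaries, which Lemma~\ref{elimhyp7} then renders finitary. The routine care-points are the independence-calculus step in the first claim (where one uses that $\bdd(h)$ extends the domain of $h$ only by bounded data) and the observation that Proposition~\ref{lem:cb-CM} applies over a hyperimaginary, equivalently boundedly closed, parameter set.
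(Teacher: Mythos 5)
Your proof is correct and follows essentially the same route as the paper: reduce $h$ to the canonical bases $\cb(a_J/h)$ of the finite parts of a presentation (the paper does this by citing Lemma~\ref{elimhyp7} together with the interdefinability of $\cb(A/B)$ with $\bigcup\{\cb(\bar a/B):\bar a\in A\text{ finite}\}$, where you instead verify the interboundedness by a direct independence calculation), and then apply Proposition~\ref{lem:cb-CM} and Lemma~\ref{elimhyp7} to replace each such canonical base, up to bounded closure, by finitary hyperimaginaries.
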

\begin{proof} By Lemma \ref{elimhyp7} every hyperimaginary is interbounded with a canonical base. Since $\cb(A/B)$
is interdefinable with $\bigcup\{\cb(\bar a/B):\bar a\in A\mbox{ finite}\}$, it is enough to show that canonical bases of types of finite tuples are interbounded with sequences of finitary hyperimaginaries. This is precisely Proposition \ref{lem:cb-CM}.\end{proof}

\begin{cor}\label{cor:main} A simple CM-trivial theory eliminates hyperimaginaries whenever it eliminates finitary ones.\end{cor}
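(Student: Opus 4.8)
The plan is to assemble Theorem~\ref{mainteo} with the hypothesis of elimination of finitary hyperimaginaries and the $G$-compactness machinery of Section~2. Fix an arbitrary hyperimaginary $h$. By Theorem~\ref{mainteo} there is a sequence $(e_i:i\in I)$ of finitary hyperimaginaries interbounded with $h$. Since $T$ eliminates finitary hyperimaginaries, each $e_i$ is interdefinable with a sequence of imaginaries; concatenating these sequences, the whole family $(e_i:i\in I)$, and therefore $h$ itself, is interbounded with a single sequence $e$ of imaginaries.

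It remains to upgrade ``interbounded with a sequence of imaginaries'' to ``eliminable''. Here I would use that simple theories are $G$-compact, so that Lemma~\ref{elimhyp4} applies and gives $a\equiv^{Ls}_Ab$ iff $a\equiv^{s}_Ab$ for all sequences $a,b$ and every parameter set $A$. With this hypothesis verified, Remark~\ref{elimhyp3} applies verbatim to the interbounded pair $(h,e)$ and shows that $h$ is eliminable. As $h$ was arbitrary, $T$ eliminates hyperimaginaries.

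I do not expect a genuine obstacle: the corollary is essentially a bookkeeping consequence of Theorem~\ref{mainteo}, Lemma~\ref{elimhyp4} and Remark~\ref{elimhyp3}. The only points needing a line of justification are that a sequence of eliminable hyperimaginaries is again interdefinable with a sequence of imaginaries (immediate, since interdefinability passes to tuples), and that interboundedness is preserved along this replacement, so that the pair $(h,e)$ is genuinely interbounded and Remark~\ref{elimhyp3} is applicable.
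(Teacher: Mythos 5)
Your proposal is correct and follows exactly the paper's own route: Theorem~\ref{mainteo} plus elimination of finitary hyperimaginaries gives interboundedness with a sequence of imaginaries, and then $G$-compactness of simple theories, Lemma~\ref{elimhyp4} and Remark~\ref{elimhyp3} finish the argument. The small bookkeeping points you flag (concatenating the eliminating sequences and preserving interboundedness) are indeed the only details, and they go through as you say.
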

\begin{proof} By Theorem \ref{mainteo} every hyperimaginary is interbounded with a sequence of finitary hyperimaginaries and so with a sequence of imaginaries.  Since $T$ is
simple, it is $G$-compact, whence $\Lstp=\stp$ over any set by Lemma
\ref{elimhyp4}. We conclude that every hyperimaginary is eliminable by Remark
\ref{elimhyp3}.\end{proof}

\begin{cor} Every small simple CM-trivial theory eliminates
hyperimaginaries.\end{cor}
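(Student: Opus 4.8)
The plan is to combine the reduction already carried out in Corollary~\ref{cor:main} with Kim's theorem on small theories. Recall from the introduction that Kim~\cite{Kim} proved that every small theory eliminates finitary hyperimaginaries. Hence the only thing to verify is that a small simple CM-trivial theory satisfies the hypothesis of Corollary~\ref{cor:main}, namely that it eliminates finitary hyperimaginaries; but this follows from smallness alone, with no use of simplicity or CM-triviality.

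Concretely, I would argue as follows. Let $T$ be a small simple CM-trivial theory. By Kim's result, $T$ eliminates finitary hyperimaginaries. Since $T$ is simple and CM-trivial, Corollary~\ref{cor:main} applies and yields that $T$ eliminates all hyperimaginaries. (Unwinding the chain: Theorem~\ref{mainteo} interbounds every hyperimaginary with a sequence of finitary ones, hence with a sequence of imaginaries once finitary hyperimaginaries are eliminated, and then $G$-compactness together with Lemma~\ref{elimhyp4} and Remark~\ref{elimhyp3} makes each such hyperimaginary eliminable.)

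There is essentially no obstacle here: all the mathematical content has been absorbed into Theorem~\ref{mainteo} and into the passage from finitary to arbitrary hyperimaginaries via $G$-compactness in Corollary~\ref{cor:main}. The single point that needs care is bibliographic — one must invoke the correct form of Kim's theorem, \emph{elimination of finitary hyperimaginaries} for small theories, rather than a statement phrased in terms of bounded or arbitrary hyperimaginaries.
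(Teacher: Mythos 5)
Your proposal is correct and coincides with the paper's own proof: invoke Kim's theorem that small theories eliminate finitary hyperimaginaries, then apply Corollary~\ref{cor:main}. No further comment is needed.
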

\begin{proof} A small simple theory
eliminates finitary hyperimaginaries by \cite{Kim}. Now apply Corollary
\ref{cor:main}.\end{proof}

\section{Stable independence for CM-trivial theories}

Recall that an $\emptyset$-invariant relation $R(x,y)$ is {\em stable} if there is no infinite indiscernible sequence $(a_i,b_i:i<\omega)$ such that $R(a_i,b_j)$ holds if and only if $i<j$. In this section, we shall show that independence is a stable relation, even with varying base set. We hope that this will help elucidate the stable forking problem.

\begin{teo}\label{stabind} In a supersimple CM-trivial theory, the relation $R(x;y_1y_2)$ given by $x\ind_{y_1}y_2$ is stable.\end{teo}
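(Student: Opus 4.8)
The plan is to argue by contradiction. Suppose the relation is not stable, so there is an infinite indiscernible sequence $(a_i,b^1_i,b^2_i:i<\omega)$ with $a_i\ind_{b^1_j}b^2_j$ if and only if $i<j$. First I would carry out the routine reductions. Since $T$ is simple, forking and nonforking of a type are witnessed on finitely many coordinates, and by indiscernibility one choice of coordinates works uniformly, so I may assume $a_i,b^1_i,b^2_i$ are finite imaginary tuples; moreover, by the results of Section 3 together with supersimplicity, canonical bases are finitary, indeed eliminable, so the $\cb$'s appearing below are genuine finite imaginary tuples. By compactness and Ramsey I may also re-index the sequence by any convenient linear order (say $\mathbb{Z}$ or $\omega+\omega$), keeping indiscernibility and the same pattern. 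Writing $c_i=b^1_ib^2_i$, the hypothesis becomes $\cb(a_i/c_j)\subseteq\bdd(b^1_j)$ exactly when $i<j$; in particular, for a fixed late index $i$ the type $\tp(a_i/c_j)$ forks over $b^1_j$ for every $j\le i$ and is nonforking for $j>i$.

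The next step is to straighten out the sequence. After extending it if necessary, $I$ is a Morley sequence over its canonical base $E$, and $E$ lies in the bounded closure of every final segment of $I$; hence $a_i\ind_E c_j$ for all $i\ne j$, and so $a_i\ind_{Eb^1_j}b^2_j$ for all $i\ne j$. Thus for $i>j$ the forking of $a_i$ with $b^2_j$ over $b^1_j$, which is genuine over $\emptyset$, disappears once $E$ is added to the base: the pair $(E,b^1_j)$ mediates it. Here supersimplicity is used a second time: by well-foundedness of forking the relevant $\mathrm{SU}$-ranks stabilise along an $a_i$-indiscernible segment of $I$, so that a fixed finite sub-piece $E_0\subseteq E$ together with $b^1_j$ already performs the mediation, while a far-out final segment of $I$ remains independent from $a_ic_j$ over $E$ (hence over $E_0$ augmented by a complement inside $E$), with $E$ in its bounded closure.

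The heart of the argument — and the step I expect to be the main obstacle — is to turn this into a violation of CM-triviality. The idea is to apply the defining property of CM-triviality to a suitable finite tuple $\alpha$ (assembled from $a_i$, $c_j$ and $\cb(a_i/c_j)$) with respect to sets $A\subseteq B$ satisfying the disjointness hypothesis $\bdd(\alpha A)\cap\bdd(B)=\bdd(A)$. I would secure this hypothesis by the device from the proof of Proposition~\ref{lem:cb-CM}: replace $\alpha$ and the ambient data by free representatives, chosen independent over the relevant canonical bases from $\alpha$ and from one another, and take $A$ to be an intersection of bounded closures, which makes the disjointness automatic — it is precisely the Morley-ness of $I$ over $E$ (a far-out final segment independent from $a_ic_j$ over $E$, with $E$ bounded over it) that supplies the independence making the free replacement possible. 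CM-triviality then forces $\cb(\alpha/A)\subseteq\bdd(\cb(\alpha/B))$, and the point is to choose $A$, $B$, $\alpha$ so that the right-hand side is trivial — pushed into $\bdd(b^1_j)$ modulo the already-established nonforking $a_i\ind_{E_0b^1_j}b^2_j$ — whereas the left-hand side is not, because it still records the genuine forking $a_i\nind_{b^1_j}b^2_j$ (via Lemma~\ref{elimhyp7}, the pieces of canonical bases in play stay where they should). The delicate bookkeeping is to keep the mediating piece $E_0$ out of the $B$-side in exactly the sense that matters, so that the conclusion of CM-triviality is not vacuously satisfied, and to verify that none of the free replacements disturb the forking and nonforking statements being used; this is where the real work lies. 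Reaching a contradiction for a single pair $i>j$ then shows that $x\ind_{y_1}y_2$ cannot order an infinite indiscernible sequence, i.e.\ it is stable.
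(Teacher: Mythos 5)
Your set-up matches the first part of the paper's proof: reduce to finite tuples, pass to a densely ordered indiscernible sequence with a cut, and use the canonical base of the limit type (your $E$, the paper's $A=\cb(\lim(I/I))\in\bdd(I^+)\cap\bdd(I^-)$) to get $a_i\ind_A I^{\mp}bc$ on each side of the cut, so that adding $A$ to the base ``mediates'' the nonforking side while the forking $a_{-1}\nind_c b$ persists. But the heart of your argument is missing, and you say so yourself (``this is where the real work lies''). An application of CM-triviality cannot by itself produce the contradiction you hope for: taking $e=\bdd(a_{-1}b c)\cap\bdd(Abc)$ (for which the disjointness hypothesis is automatic, with no need for the free-representative device of Proposition \ref{lem:cb-CM}) it only relocates a canonical base witnessing the forking into $\bdd(a_{-1}bc)\cap\bdd(A)$. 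Such a mediating set $A_1\subseteq\bdd(A)$ can perfectly well exist; nothing about its mere existence contradicts anything, since $\cb(\alpha/B)$ is never ``trivial'' in the sense you need. What the paper extracts from the relocation is a \emph{numerical} contradiction: because $A_1$ lies in $\bdd(\bar A)$, over which the sequence is still indiscernible, one has $\bar a_1\equiv_{A_1}\bar a_{-1}$, and then the $\P$-weight computation
$w_\P(\bar a_{-1}/b)=w_\P(\bar a_{-1}/A_1b)+w_\P(A_1/b)=w_\P(\bar a_1/A_1)+w_\P(A_1/b)\ge w_\P(\bar a_1/b)=w_\P(\bar a_{-1})>w_\P(\bar a_{-1}/b)$
is absurd.

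To run that computation one needs ingredients your sketch never mentions: the decomposition of $a_1$ into a domination-equivalent independent tuple $\bar a_1$ of realizations of regular types over some $B\ind I$ (Fact \ref{decomp}, which is where supersimplicity genuinely enters), a minimality argument showing all the regular types involved lie in a single non-orthogonality class $\P$, and the semi-regularity inequality $w_\P(\bar a_{-1})>w_\P(\bar a_{-1}/b)$. Your two invocations of supersimplicity (eliminability of canonical bases, stabilisation of SU-ranks along the sequence) do not substitute for this. You also omit the preliminary step in which CM-triviality is applied a first time, with $e_0=\bdd(a_1c)\cap\bdd(Ac)$, precisely to absorb $c$ into the parameters and reduce to $c=\emptyset$ before the weight argument. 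So the proposal assembles the right objects at the level of slogans but contains no mechanism that actually makes the forking on $I^-$ collide with the nonforking on $I^+$; as written it is a plan, not a proof.
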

\begin{proof} Suppose not. Then there is an indiscernible sequence $I=(a_i:i\in\Q)$ and tuples $b,c$ such that\begin{itemize}                                                                                                          \item $I^+=(a_i:i>0)$ is indiscernible over $I^-bc$,
\item $I^-=(a_i:i<0)$ is indiscernible over $I^+bc$, and
\item $a_i\ind_cb$ if and only if $i>0$.\end{itemize}
We consider limit types with respect to the cut at $0$. Put 
$$p=\lim(I/I),\quad p^+=\lim(I^+/Ibc)\quad\text{and}\quad p^-=\lim(I^-/Ibc).$$
By finite satisfiability, $p^+$ and $p^-$ are both non-forking extensions of $p$, which is Lascar-strong. Let
$$A=\cb(p)=\cb(p^+)=\cb(p^-)\in\bdd(I^+)\cap\bdd(I^-).$$
As $p^+$ and $p^-$ do not fork over $A$, we have
$$a_i\ind_AI^+bc\text{ for all }i<0,\quad\text{and}\quad a_i\ind_AI^-bc\text{ for all }i>0.$$
We consider first $e_0=\bdd(a_1c)\cap\bdd(Ac)$. Then
$$\bdd(a_1e_0)\cap\bdd(Ae_0)=e_0.$$
Put $A_0=\cb(a_1/e_0)$. By CM-triviality
$$A_0\in e_0\cap\bdd(\cb(a_1/Ae_0))\subseteq\bdd(a_1c)\cap\bdd(A),$$
since $a_1\ind_Ae_0$ implies $\cb(a_1/Ae_0)\subseteq\bdd(A)$.

Note that $a_1\ind_cb$ yields $a_1\ind_{A_0c}b$. Moreover $c\in e_0$, so $a_1\ind_{A_0}e_0$ implies $a_1\ind_{A_0}c$, whence $a_1\ind_{A_0}cb$ by transitivity. On the other hand, suppose $bc\ind_{A_0}a_{-1}$. Then $b\ind_{A_0c}a_{-1}$; as $b\ind_ca_1$ implies $b\ind_cA_0$, we obtain $b\ind_c A_0a_{-1}$, contradicting $a_{-1}\nind_cb$. Therefore $bc\nind_{A_0}a_{-1}$. Since $I$ remains indiscernible over $A_0,$ and both $I^+$ and $I^-$ remain indiscernible over $A_0bc$, we may add $A_0$ to the parameters and suppose $c=\emptyset$ (replacing $b$ by $bc$).
\begin{fact}\label{decomp}\cite[Theorem 5.2.18]{Wagner-book} In a supersimple theory, for any finitary $a$ there are some $B\ind a$ and a hyperimaginary finite tuple $\bar a$ of independent realizations of regular types over $B$, such that $\bar a$ is domination-equivalent with $a$ over $B$.\end{fact}
By Fact \ref{decomp} there are $B\ind a_1$ and an independent tuple $\bar a_1$ of realizations of regular types over $B$ such that $\bar a_1$ is domination-equivalent with $a_1$ over $B$. Since $B\ind a_1$ and $I$ is indiscernible, we may assume by \cite[Theorem 2.5.4]{Wagner-book} that
$Ba_i\equiv Ba_1$ for all $i\in\Q$, and $B\ind I$. So there are $\bar a_i$ for $i\in\Q$ with $Ba_i\bar a_i\equiv Ba_1\bar a_1$. We can also assume $B\ind_Ib$, whence $B\ind Ib$. In particular $b\ind_{a_i}B$, so for $i>0$ we obtain $b\ind Ba_i$ and thus $b\ind_Ba_i$, while for $i<0$ we have $b\nind a_iB$ and $b\ind B$, whence $b\nind_Ba_i$. By domination-equivalence, $\bar a_i\ind_B b$ for $i>0$ whereas $\bar a_i\nind_B b$ for $i<0$.

By compactness and Ramsey we may suppose in addition that $\bar I=(\bar a_i:i\in\Q)$ is $B$-indiscernible, $\bar I^+=(\bar a_i:i>0)$ is indiscernible over $Bb\bar I^-$ and $\bar I^-=(\bar a_i:i<0)$ is indiscernible over $Bb\bar I^+$. We shall add $B$ to the parameters and suppress it from the notation.
We may further assume that $\bar a_{-1}'\ind b$ for any proper subtuple $\bar a_{-1}'\subseteq\bar a_{-1}$.
\beh All the regular types in $\bar a_i$ are non-orthogonal.\ebeh
\bewbeh Consider $c, c'\in\bar a_{-1}$ and put $\bar c=\bar a_{-1}\setminus\{c,c'\}$. Then $\bar cc\ind b$ and $\bar cc'\ind b$ by minimality, whence $c\ind b\bar c$ and $c'\ind b\bar c$, as $\bar a_{-1}=\bar ccc'$ is an independent tuple.

Suppose $c\ind_{b\bar c}c'$. Then $c\ind b\bar cc'$, whence $c\ind_{\bar cc'}b$ and finally $b\ind \bar ccc'$, contradicting $b\nind \bar a_{-1}.$ So $\tp(c/b\bar c)$ and $\tp(c'/b\bar c)$ are non-orthogonal; as they do not fork over $\emptyset$ we get $\tp(c)$ non-orthogonal to $\tp(c')$. The claim now follows, as all $\bar a_i$ have the same type over $\emptyset$.\qed

Let $w_\P(.)$ denote the weight with respect to that non-orthogonality class $\P$ of regular types. Then $\bar a_i$ is $\P$-semi-regular; since $\bar a_{-1}\nind b$ we obtain 
$$w_\P(\bar a_{-1})>w_\P(\bar a_{-1}/b)$$
by \cite[Lemma 7.1.14]{Pi96} (the proof works just as well for the simple case).

We again consider limit types with respect to the cut at $0$. Put
$$\bar p=\lim(\bar I/\bar I),\quad\bar p^+=\lim(\bar I^+/\bar Ib)\quad\text{and}\quad\bar p^-=\lim(\bar I^-/\bar Ib).$$
Once more, $\bar p$ is Lascar-strong and $\bar p^+$ and $\bar p^-$ are non-forking extensions of $\bar p$ by finite satisfiability; let
$$\bar A=\cb(\bar p)=\cb(\bar p^+)=\cb(\bar p^-)\in\bdd(\bar I^+)\cap\bdd(\bar I^-).$$
As before,
$$\bar a_i\ind_{\bar A}\bar I^+b\text{ for all }i<0,\quad\text{and}
\quad\bar a_i\ind_{\bar A}\bar I^-b\text{ for all }i>0.$$
Put $e_1=\bdd(\bar a_{-1}b)\cap\bdd(\bar Ab)$. Then
$$\bdd(\bar a_{-1}e_1)\cap\bdd(\bar Ae_1)=e_1.$$
Let $A_1=\cb(\bar a_{-1}/e_1)$. By CM-triviality
$$A_1\in e_1\cap\bdd(\cb(\bar a_{-1}/\bar Ae_1))\subseteq\bdd(\bar a_{-1}b)\cap\bdd(\bar A),$$
since $\bar a_{-1}\ind_{\bar A}e_1$ implies $\cb(a_{-1}/\bar Ae_1)\subseteq\bdd(\bar A)$.

As $b\in e_1$ and $\bar a_{-1}\ind_{A_1}e_1$ we obtain $\bar a_{-1}\ind_{A_1}b$. Moreover $\bar a_1\equiv_{A_1}\bar a_{-1}$, since $A_1\subseteq\bdd(\bar A)$ and $\bar I$ remains indiscernible over $\bar A$. Therefore
$$w_\P(\bar a_{-1}/A_1b)=w_\P(\bar a_{-1}/A_1)=w_\P(\bar a_1/A_1).$$
Recall that $A_1\subseteq\bdd(\bar a_{-1}b)$.
Then $$\begin{aligned}w_\P(\bar a_{-1}/b)&=w_\P(\bar a_{-1}A_1/b)\\
&=w_\P(\bar a_{-1}/A_1b)+w_\P(A_1/b)\\
   &=w_\P(\bar a_1/A_1)+w_\P(A_1/b)\\
   &\ge w_\P(\bar a_1/A_1b)+w_\P(A_1/b)\\
   &=w_\P(\bar a_1A_1/b)\ge w_\P(\bar a_1/b)\\
&=w_\P(a_1)=w_\P(a_{-1})>w_\P(\bar a_{-1}/b).\end{aligned}$$
This final contradiction proves the theorem.\end{proof}

\begin{remark} Note that the proof only uses the conclusion of Fact \ref{decomp}. The theorem thus still holds for simple CM-trivial theories with finite weights (strongly simple theories) and enough regular types, for instance CM-trivial simple theories without dense forking chains.\end{remark}

\begin{quest} By \cite[Theorem 4.20]{PaWa} it is sufficient to assume that every regular type is CM-trivial, as this implies global CM-triviality. However, for a regular type $p$ a more general notion of CM-triviality is often more appropriate, namely
$$\cl_p(aA)\cap\cl_p(B)=\cl_p(A)\quad\Rightarrow\quad\cb(a/\cl_p(A))\subseteq\cl_p(\cb(a/\cl_p(B))).$$
If this holds for all regular types $p$, is independence still stable?\end{quest}

\begin{cor} An $\omega$-categorical supersimple CM-trivial theory has stable forking.\end{cor}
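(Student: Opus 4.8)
The plan is to derive stable forking from the stability of the independence relation (Theorem \ref{stabind}) by exploiting that, in an $\omega$-categorical theory, forking over a finite set is a definable condition. As a preliminary I would record that an $\omega$-categorical theory is small, hence eliminates finitary hyperimaginaries by \cite{Kim} and so all hyperimaginaries by Corollary \ref{cor:main}; in particular its independence relation has the intersection property $\bdd(aC)\cap\bdd(bC)=\bdd(C)$ whenever $a\ind_Cb$. By Theorem \ref{stabind} the relation $x\ind_{y_1}y_2$ is stable, and therefore so is its negation (reverse the order on the indiscernible sequence).

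The heart of the argument is an observation about a \emph{finite} base $B$: for finite tuples of imaginaries $x,y$ the relation $x\nind_By$ is $\Aut(\mathfrak C/B)$-invariant, hence --- since over the finite set $B$ there are only finitely many orbits on any finite product of sorts, each $B$-definable --- it is definable over $B$ by some formula $\delta_B(x,y)$. I would then check two things. First, $\delta_B$ is stable: an instance of instability would give, after extracting a $B$-indiscernible sequence by Ramsey, a sequence $(a_i,b_i:i<\omega)$ with $a_i\nind_Bb_j$ iff $i<j$; prefixing each term by a fixed enumeration of $B$ turns this into an $\emptyset$-indiscernible sequence ordered by $\nind$, contradicting Theorem \ref{stabind} applied with first argument the constant tuple $B$ (after reversing the order). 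Second, if $a\nind_Bb$ then $\delta_B(x,b)$ forks over $B$: its solution set is the union of the finitely many complete types over the finite set $Bb$ that fork over $B$, each of these is isolated over $Bb$ and, being a forking type, is implied by a single formula forking over $B$, so $\delta_B(x,b)$ is a finite disjunction of formulas forking over $B$ and hence forks over $B$. This already gives stable forking over finite sets.

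It then remains to pass from a finite base to the model $M$. Given $a\nind_BM$ with $B\subseteq M$ a model, I would take $a$ finite (finite character) and, using supersimplicity, replace $M$ by a finite $\bar m\subseteq M$ over which $\tp(a/M)$ does not fork and which bounds the quasi-finitary hyperimaginary $\cb(a/M)$, obtaining $a\nind_{B_0}\bar m$ for a suitable finite $B_0\subseteq B$. After eliminating hyperimaginaries one may take a coordinate $e_0$ of an imaginary tuple interdefinable with $\cb(a/M)$: it lies in $\acl^{eq}(\bar m)$ but, since $\tp(a/M)$ forks over $B$, not in $\bdd(B)$. The intersection property then forces any formula over $\bar m$ pinning $e_0$ down from its realizations to fork over $B$, and such a formula is an instance of the always-stable ``graph of a $\emptyset$-definable function'' formula; combining it with $\delta_{B_0}(x,\bar m)$ should produce the required stable formula in $\tp(a/M)$ forking over $B$.

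I expect this last reduction to be the main obstacle. The delicate point is that forking over a subset is \emph{a priori} weaker than forking over the whole set, so a stable formula forking over a finite $B_0\subseteq B$ need not fork over $B$; the finite data $B_0$ and $\bar m$ have to be chosen so that they already witness that the canonical base of $\tp(a/M)$ escapes $\bdd(B)$. This is precisely where supersimplicity (every type not forking over a finite subset of its domain, and quasi-finitariness of canonical bases) and the intersection property coming from elimination of hyperimaginaries are needed, whereas the definability and stability of $\delta_B$ are soft consequences of $\omega$-categoricity and Theorem \ref{stabind}.
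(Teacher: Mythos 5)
Your first two steps are sound and essentially coincide with the paper's: $\omega$-categoricity makes non-forking over a finite base a definable relation, and Theorem \ref{stabind} makes that formula stable (the paper works with the uniform formula $\varphi(\bar x,\bar y_1\bar y_2)$ defining $\bar x\nind_{\bar y_1}\bar y_2$, so the base is absorbed into the parameter variable and stability is literally Theorem \ref{stabind}; your fixed-base $\delta_B$ with an enumeration of $B$ prefixed to the parameters amounts to the same thing). Your argument that such a formula forks over the finite base it is defined from is also fine.

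The gap is exactly where you say you expect it: the passage from a finite base to an arbitrary $B$ (or a model $M$). You never close it, and the machinery you propose for it --- elimination of hyperimaginaries, a coordinate of an imaginary canonical base, a ``graph of a function'' formula --- is both unnecessary and not shown to produce a single stable formula in $\tp(a/M)$ forking over $B$. The paper's resolution is a one-line trick you are missing: given finite tuples with $\bar a\nind_B\bar c$, apply supersimplicity to the type of the \emph{pair} $\bar a\bar c$ over $B$ to obtain a finite $\bar b\in B$ with $\bar a\bar c\ind_{\bar b}B$. Transitivity gives $\bar a\nind_{\bar b}\bar c$, so $\varphi(\bar x,\bar b\bar c)\in\tp(\bar a/B\bar c)$; and the side condition $\bar c\ind_{\bar b}B$ is precisely what promotes forking over $\bar b$ to forking over $B$: if $a'\models\varphi(\bar x,\bar b\bar c)$ with $a'\ind_B\bar b\bar c$, then $\bar c\ind_{\bar b}B$ together with $\bar c\ind_Ba'$ yields $\bar c\ind_{\bar b}a'$, contradicting $a'\nind_{\bar b}\bar c$. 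Thus the formula already forks over all of $B$, with no need for canonical bases or elimination of hyperimaginaries. Taking a finite base for $\bar a$ alone, as you do, is what creates the obstacle you then cannot surmount.
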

\begin{proof} Suppose $A\nind_B C$. Then there are finite tuples $\bar a\in A$ and $\bar c\in C$ with $\bar a\nind_B\bar c$. By supersimplicity, there is a finite $\bar b\in B$ with $\bar a\bar c\ind_{\bar b}B$. Thus $\bar a\nind_{\bar b}\bar c$. By $\omega$-categoricity there is a formula $\varphi(\bar x,\bar y_1\bar y_2)$ which holds if and only if $\bar x\nind_{\bar y_1}\bar y_2$. Then $\varphi$ is stable by Theorem \ref{stabind}, and $\varphi(\bar x,\bar b\bar c)\in\tp(\bar a/\bar b\bar c)$.\end{proof}

Let $\Sigma$ be an $\emptyset$-invariant family of types. Recall the definition of $\Sigma$-closure:
$$\cl_\Sigma(A)=\{a:\tp(a/A)\text{ is $\Sigma$-analysable}\}.$$
\begin{fact}\label{clind}\cite[Lemma 3.5.3 and 3.5.5]{Wagner-book} If $\dcl(AB)\cap\cl_\Sigma(A)\subseteq\bdd(A)$, then $B\ind_A\cl_\Sigma(A)$. If $A\ind_BC$, then $A\ind_{\cl_\Sigma(B)}C$.\end{fact}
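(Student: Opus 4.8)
The plan is to prove both assertions by analysing canonical bases, the first sentence serving as the main engine. Throughout I use that $\cl_\Sigma$ is a closure operator: it is $\bdd$-closed and $\dcl$-closed, a finite tuple of elements each $\Sigma$-analysable over a set is $\Sigma$-analysable over that set, and $\Sigma$-analysability is preserved when one enlarges the base. I also use that a canonical base $\cb(a/X)$ lies in the definable closure of $X$ and in the definable closure of a Morley sequence of $\tp(a/X)$.

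For the first assertion I argue by contraposition. Suppose $B\nind_A\cl_\Sigma(A)$; then there is a finite tuple $d\in\cl_\Sigma(A)$ with $d\nind_AB$, and I consider $e=\cb(d/AB)$. On the one hand $e$ is definable over $AB$. On the other hand $e$ lies in the definable closure of a Morley sequence $(d_i)$ of $\tp(d/AB)$; each $d_i$ realises $\tp(d/A)$, hence is $\Sigma$-analysable over $A$, so a finite subtuple of $(d_i)$ lies in $\cl_\Sigma(A)$ and therefore $e\sub\cl_\Sigma(A)$. Finally, since $d\nind_AB$ we have $e\notin\bdd(A)$. Thus $e\in\dcl(AB)\cap\cl_\Sigma(A)$ but $e\notin\bdd(A)$, contradicting the hypothesis. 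I expect no obstacle here.

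For the second assertion, by symmetry it suffices to show $C\ind_{\cl_\Sigma(B)}A$, i.e.\ that $E=\cb(C/A\,\cl_\Sigma(B))$ satisfies $E\sub\cl_\Sigma(B)$. First I reduce the base to a finite analysable piece: by finite character of the canonical base, $E\sub\bdd(dA)$ for some finite $d\sub\cl_\Sigma(B)$, so $E\sub\cl_\Sigma(BA)$ since $\bdd(dA)\sub\cl_\Sigma(BA)$. Hence the task becomes a descent statement: from $E\sub\cl_\Sigma(BA)$ and the hypothesis $A\ind_BC$ deduce $E\sub\cl_\Sigma(B)$. I would prove this by induction on a $\Sigma$-analysis of $\tp(E/BA)$, peeling off a $\Sigma$-internal bottom layer $E_0\in\dcl(BAE)$ and using $A\ind_BC$ together with the definition of internality (which coordinatises $E_0$ by realisations of $\Sigma$ independent from $E_0$ over $B$) to replace the $A$-based internality data by $B$-based data, thereby moving the analysis down to $B$; the remaining layers are then handled over $BE_0$ by the induction hypothesis.

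The main obstacle is exactly this $\Sigma$-internal base case of the descent: transporting the internality witnesses across the base change from $BA$ to $B$ while respecting the independence $A\ind_BC$. The transitivity calculus alone does not suffice here --- indeed the naive reductions would require $A\ind_{BC}\cl_\Sigma(B)$, which can fail (already for $\Sigma$ the family of all types) --- so genuine use of the internal structure of $\Sigma$-analysable types is needed, with the first assertion invoked only after the part of $\dcl(ABC)$ visible in $\cl_\Sigma(B)$ has been absorbed into the base.
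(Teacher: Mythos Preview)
The paper does not prove this statement; it is quoted as a Fact from \cite{Wagner-book}, so there is no in-paper argument to compare your proposal against.

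Assessing the proposal on its own: your argument for the first assertion is correct and standard (canonical base of a $\Sigma$-analysable element, realised via a Morley sequence, lands in $\dcl(AB)\cap\cl_\Sigma(A)$ and is non-bounded over $A$ exactly when dependence occurs). For the second assertion, however, what you have written is a plan rather than a proof. You correctly reduce to showing $E=\cb(C/A\,\cl_\Sigma(B))\subseteq\cl_\Sigma(B)$ and correctly observe $E\subseteq\cl_\Sigma(AB)$; but the descent from $\cl_\Sigma(AB)$ to $\cl_\Sigma(B)$ is precisely the content of the lemma, and you leave it as an acknowledged ``main obstacle'' with only a loose description (``transporting internality witnesses across the base change'', ``absorbing into the base'') of how it might go. Until the $\Sigma$-internal base case is actually carried out, the second half is not a proof. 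A minor side issue: the step ``by finite character of the canonical base, $E\subseteq\bdd(dA)$ for some finite $d$'' is not valid in general---local character only gives $d$ of size at most $|T|$---but this reduction is unnecessary anyway, since $E\subseteq\bdd(A\,\cl_\Sigma(B))\subseteq\cl_\Sigma(AB)$ directly.
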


\begin{cor}\label{stabindcl} In a supersimple CM-trivial theory the relation $R(x;y_1y_1)$ given by $x\ind_{\cl_\Sigma(y_1)} y_2$ is stable.\end{cor}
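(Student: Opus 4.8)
The plan is to derive the statement from Theorem \ref{stabind} by showing that the extra $\Sigma$-closure in the base can, inside any witnessing configuration, be replaced by a finitary hyperimaginary. So I would argue by contradiction: if $R(x;y_1y_2)$, $x\ind_{\cl_\Sigma(y_1)}y_2$, were unstable, then exactly as at the start of the proof of Theorem \ref{stabind} one extracts an indiscernible sequence $I=(a_i:i\in\Q)$ of finite tuples and tuples $b^1,b^2$ such that $I^+=(a_i:i>0)$ and $I^-=(a_i:i<0)$ are mutually indiscernible over $I^\mp b^1b^2$, and $a_i\ind_{\cl_\Sigma(b^1)}b^2$ holds iff $i>0$.

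The heart of the matter is the identity $a_i\ind_{\cl_\Sigma(b^1)}b^2\iff a_i\ind_{c_i}b^2$, where $c_i=\cb(a_ib^2/\cl_\Sigma(b^1))$. Indeed, by supersimplicity $c_i$ is finitary and $c_i\subseteq\cl_\Sigma(b^1)$, and by definition $a_ib^2\ind_{c_i}\cl_\Sigma(b^1)$; one direction of the equivalence is immediate from monotonicity and the other follows by the usual independence calculus from $a_ib^2\ind_{c_i}\cl_\Sigma(b^1)$ together with $c_i\subseteq\cl_\Sigma(b^1)$. More robustly, I expect that the entire proof of Theorem \ref{stabind} goes through with $\cl_\Sigma(b^1)$ in the role of $b^1$ (equivalently of the base set ``$c$'' there): every canonical base formed in that proof is the canonical base of a \emph{finite} tuple over $\cl_\Sigma(b^1)$ (or over $A\cl_\Sigma(b^1)$, $e_0$, etc.), hence is finitary by Lemma \ref{elimhyp7}; the quasi-finitary hyperimaginary $e_0=\bdd(a_1\,\cl_\Sigma(b^1))\cap\bdd(A\,\cl_\Sigma(b^1))$, the CM-triviality step yielding $A_0=\cb(a_1/e_0)\subseteq\bdd(a_1\,\cl_\Sigma(b^1))\cap\bdd(A)$, the domination/weight argument, and the final numerical contradiction are all unaffected. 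The role of Fact \ref{clind} is precisely to control $\cl_\Sigma(b^1)$: it guarantees that adding a finite piece of $\cl_\Sigma(b^1)$ to $b^1$ absorbs all the $\Sigma$-closure visible to a given finite tuple, so that passing from $b^1$ to $\cl_\Sigma(b^1)$ introduces no forking one would not otherwise see; the second clause of Fact \ref{clind} also gives $a\ind_{b^1}b^2\Rightarrow a\ind_{\cl_\Sigma(b^1)}b^2$, which is consistent with and guides the reduction.

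The main obstacle I anticipate is the interaction between finitariness and indiscernibility: the finitary piece $c_i$ above depends a priori on $a_i$ (not just on the term $b^1b^2$ of a would-be witnessing sequence), and $\cl_\Sigma(b^1)$ itself is too large to be ``added to the parameters'' while keeping $I^\pm$ mutually indiscernible over it. I would resolve this exactly as in Theorem \ref{stabind}: pass to the limit type $p$ at the cut $0$ and let $A=\cb(p)=\cb(p^+)=\cb(p^-)\in\bdd(I^+)\cap\bdd(I^-)$, so that $a_i\ind_A I^\mp b^1b^2$ on the respective sides; then form $e_0$ and $A_0$ as above and absorb the finitary data (which lies in $\bdd(I^+)\cap\bdd(I^-)$, so $I$ and the mutual indiscernibility of $I^\pm$ persist) into the base, reducing to a configuration with a \emph{finitary} first coordinate in the base. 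That configuration is precisely of the kind ruled out by Theorem \ref{stabind}, giving the desired contradiction; Corollary \ref{stabindcl} follows.
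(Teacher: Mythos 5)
Your reduction has the right general shape (trade $\cl_\Sigma(y_1)$ for something small and quote Theorem \ref{stabind}), and you correctly identify the obstacle, but your proposed resolution does not close the gap. The identity $a_i\ind_{\cl_\Sigma(b^1)}b^2\iff a_i\ind_{c_i}b^2$ with $c_i=\cb(a_ib^2/\cl_\Sigma(b^1))$ is fine but unusable, since $c_i$ varies with $i$; and the limit-type/CM-triviality step only absorbs a finitary $A_0$ into the \emph{base} on the left. After that step the set $\cl_\Sigma(b^1)$ is still present on the right: the dependence you have for $i<0$ is $b^2\cl_\Sigma(b^1)\nind_{A_0}a_{-1}$, which does \emph{not} give $b^2\nind_{A_0}a_{-1}$ (the forking may come entirely from $\cl_\Sigma(b^1)$), so the second coordinate of your would-be witness must be $b^2\cl_\Sigma(b^1)$. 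But $I^+$ and $I^-$ are only mutually indiscernible over $b^1b^2$, not over $\cl_\Sigma(b^1)$, and this cannot be arranged by Ramsey/compactness; consequently neither the black-box application of Theorem \ref{stabind} nor the ``rerun the whole proof with $\cl_\Sigma(b^1)$ as base'' variant goes through --- already the limit types $\lim(I^{\pm}/I\,b^2\,\cl_\Sigma(b^1))$ need not be well defined, and any finite subtuple of $\cl_\Sigma(b^1)$ witnessing the forking again depends on $i$.

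The missing idea is to take the finite piece of the closure as seen from $bc$ rather than from $a_i$: put $c'=\dcl(bc)\cap\cl_\Sigma(c)$ (writing $c=b^1$, $b=b^2$). Then $\cl_\Sigma(c')=\cl_\Sigma(c)$ and $\dcl(bc')\cap\cl_\Sigma(c')\subseteq c'$, so the first clause of Fact \ref{clind} gives $b\ind_{c'}\cl_\Sigma(c)$; by transitivity $a_i\ind_{\cl_\Sigma(c)}b$ implies $a_i\ind_{c'}b$, while the second clause of Fact \ref{clind} gives the converse. Thus $a_i\ind_{c'}b$ iff $i>0$, with one base $c'$ for all $i$; and since $c'\subseteq\dcl(bc)$, all the indiscernibility hypotheses persist, so Theorem \ref{stabind} applies directly with $y_1=c'$, $y_2=b$. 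No canonical bases, limit types or CM-triviality are needed in the reduction --- those are already packaged inside Theorem \ref{stabind}.
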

\begin{proof} Suppose not. Then there is an indiscernible sequence $I=(a_i:i\in\Q)$ and tuples $b$, $c$ such that\begin{itemize}                                                                                                          \item $I^+=(a_i:i>0)$ is indiscernible over $I^-bc$,
\item $I^-=(a_i:i<0)$ is indiscernible over $I^+bc$, and
\item $a_i\ind_{\cl_\Sigma(c)}b$ if and only if $i>0$.\end{itemize}
Put $c'=\dcl(bc)\cap\cl_\Sigma(c)$. By Fact \ref{clind} we have $b\ind_{c'}\cl_\Sigma(c)$, so by transitivity $a_i\ind_{c'}b$ for $i>0$. Suppose $a_i\ind_{c'}b$ for $i<0$. Since $\cl_\Sigma(c')=\cl_\Sigma(c)$, Fact \ref{clind} yields $a_i\ind_{\cl_\Sigma(c)}b$, a contradiction. Thus $a_i\ind_{c'}b$ if and only if $i>0$, contradicting Theorem \ref{stabind}.\end{proof}

To conclude the paper we prove a version of Corollary \ref{stabindcl} without the assumption of CM-triviality, but for a particular $\emptyset$-invariant family, namely the family $\P$ of all non one-based types.
\begin{fact}\label{fact:sigmabase}\cite[Corollary 5.2]{PaWa} In a simple theory $a\ind_{\cl_\P(a)\cap\bdd(b)} b$ for all tuples $a$ and $b$, where $\P$ is the family of all non one-based types.\end{fact}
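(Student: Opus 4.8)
The assertion $a\ind_{\cl_\P(a)\cap\bdd(b)}b$ is really a statement about $\cb(a/b)$, so the first step is to extract that. Put $e=\cl_\P(a)\cap\bdd(b)$. Since $e\sub\bdd(b)$ we have $\bdd(eb)=\bdd(b)$, hence $\Lstp(a/eb)=\Lstp(a/b)$ and $\cb(a/eb)=\cb(a/b)$; and as $\cb(a/eb)$ is the smallest definably closed subset of $\bdd(b)$ over which $\tp(a/eb)$ does not fork, one gets
$$a\ind_e b\iff a\ind_e eb\iff \cb(a/b)\sub\bdd(e).$$
Now $\cb(a/b)\sub\bdd(b)$ always, and $\cl_\P(a)$ is boundedly closed (algebraic types are $\P$-analysable), so if $\cb(a/b)\sub\cl_\P(a)$ then $\cb(a/b)\sub\cl_\P(a)\cap\bdd(b)=e\sub\bdd(e)$. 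Thus the whole statement reduces to
$$\cb(a/b)\sub\cl_\P(a),$$
that is: the canonical base of any type is, over any of its realisations, analysable in the family of non one-based types (a way of saying that $T$ is one-based modulo $\cl_\P$).

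\textbf{The core claim.} To prove $d:=\cb(a/b)\in\cl_\P(a)$ I would argue by contradiction. Set $d_0=\cl_\P(a)\cap\bdd(ad)$; then $a\in d_0$, $d\notin\bdd(d_0)$, and $d_0$ absorbs every element of $\bdd(ad)$ that is $\P$-analysable over $a$, so $\tp(d/d_0)$ is foreign to $\P$. Since $\P$ is exactly the class of all non one-based types, foreignness to $\P$ is the same as one-basedness, so $\tp(d/d_0)$ is one-based; in particular $\cb(d/Y)\sub\bdd(d\,d_0)$ for every $Y\supseteq d_0$. The plan is then to contradict this using that $d$ is a canonical base: from $a\ind_d b$ one gets $d=\cb(\Lstp(a/d))$, so, taking a Morley sequence $(a_i:i<\omega)$ of $\Lstp(a/d)$ with $a_0=a$, one can write $d$ as the bounded closure of countably many finitary canonical bases $e_n=\cb(a_n/a_0\cdots a_{n-1})$, each lying in $\bdd(d)$, with each $a_n\equiv^{Ls}_d a_0$. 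Transporting $\P$-analysability of the $e_n$ across the sequence by automorphisms fixing $\bdd(d)$ pointwise, and feeding in one-basedness of $\tp(d/d_0)$, one forces $d\in\cl_\P(a_0)=\cl_\P(a)$, a contradiction; making this precise is the analysis carried out in \cite{PaWa}.

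\textbf{The main obstacle.} All the difficulty sits in the core claim, and there are two delicate points. The first is the equivalence ``foreign to the non one-based types $\iff$ one-based'', which rests on the decomposition of an arbitrary type into a one-based part and a part analysable in the non one-based types. The second, and more bothersome, is the bookkeeping that turns one-basedness of $\tp(d/d_0)$ into a contradiction: one cannot simply take the canonical base of $d$ over the whole Morley sequence, because $d_0$ already contains $a$ and that canonical base is then trivial, so one must argue through the finitary pieces $e_n$ and through suitably chosen disjoint infinite sub-chunks of the sequence. The reduction in the first step, by contrast, is purely formal, using only the characterisation of forking through canonical bases and the fact that $\cl_\P$ is a boundedly closed closure operator.
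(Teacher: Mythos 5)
This statement is imported into the paper as a \emph{Fact}, quoted verbatim from \cite[Corollary 5.2]{PaWa}; the paper gives no proof of it, so there is no in-paper argument to compare yours against. Your opening reduction is correct and does identify the real content of the cited result: since $e=\cl_\P(a)\cap\bdd(b)$ is a boundedly closed subset of $\bdd(b)$, the assertion $a\ind_e b$ is equivalent to $\cb(a/b)\subseteq\bdd(e)$, and hence to $\cb(a/b)\subseteq\cl_\P(a)$. The observation that $\tp(d/d_0)$ is foreign to $\P$ for $d_0=\cl_\P(a)\cap\bdd(ad)$ is also sound, and the passage from foreignness to one-basedness is easier than you suggest: a type that is not one-based is itself a member of $\P$, and no unbounded type is foreign to itself, so no decomposition theorem is needed there.

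The core claim, however --- that one-basedness of $\tp(d/d_0)$ together with $d$ being a canonical base forces $d\in\bdd(d_0)$ --- is precisely where all the difficulty of \cite[Corollary 5.2]{PaWa} lives, and your sketch of it does not go through as written. Two concrete problems. First, you assert that the pieces $e_n=\cb(a_n/a_0\cdots a_{n-1})$ lie in $\bdd(d)$; this implicitly invokes base monotonicity of canonical bases ($B\subseteq C\Rightarrow\cb(a/B)\subseteq\bdd(\cb(a/C))$, applied with $C=a_{<n}d$), which is false in general --- CM-triviality, the subject of this very paper, is exactly the hypothesis under which a restricted form of it holds, and the present Fact is stated for arbitrary simple theories. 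Second, ``transporting $\P$-analysability of the $e_n$ across the sequence'' presupposes that these finitary canonical bases are already known to be $\P$-analysable over a realization, which is an instance of the very statement being proved; without an induction scheme (available in the supersimple setting via SU-rank, as in Chatzidakis' original version, but not in a general simple theory) this is circular. You acknowledge the gap by deferring to ``the analysis carried out in [PaWa]'', but that analysis is the whole theorem: what remains after your (correct) formal reduction is not bookkeeping but the substance of the result.
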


\begin{teo}\label{stabindP} In a simple theory, the relation $R(x;y_1y_2)$ given by $x\ind_{\cl_\P(y_1)} y_2$ is stable, where $\P$ is the family of all non one-based types.\end{teo}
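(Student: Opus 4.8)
The plan is to follow the proof of Theorem~\ref{stabind}, with Fact~\ref{fact:sigmabase} playing the role of CM-triviality. The point is that Fact~\ref{fact:sigmabase} says precisely that $T$ is \emph{one-based relative to} $\cl_\P$: for all $u,v$ one has $\cb(u/v)\subseteq\cl_\P(u)$, since by Fact~\ref{fact:sigmabase} $\tp(u/v)$ does not fork over $\cl_\P(u)\cap\bdd(v)$, so $\cb(u/v)\subseteq\bdd(\cl_\P(u)\cap\bdd(v))\subseteq\cl_\P(u)$; consequently $\cl_\P(uw)\cap\cl_\P(vw)=\cl_\P(w)$ whenever $w$ is $\cl_\P$-closed and $u\ind_w v$. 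Since one-basedness is strictly stronger than CM-triviality, this should let us replace the supersimplicity-dependent regular-type and weight machinery used at the end of the proof of Theorem~\ref{stabind} by a direct argument.

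Assuming $R$ unstable, extract the usual configuration: an indiscernible $I=(a_i:i\in\Q)$ and tuples $b,c$ with $I^+=(a_i:i>0)$ indiscernible over $I^-bc$, $I^-=(a_i:i<0)$ indiscernible over $I^+bc$, and $a_i\ind_{\cl_\P(c)}b$ iff $i>0$. First I would reduce the base exactly as in the proof of Corollary~\ref{stabindcl}: replacing $c$ by $c'=\dcl(bc)\cap\cl_\P(c)$, Fact~\ref{clind} gives $b\ind_{c'}\cl_\P(c)$, so $a_i\ind_{c'}b\iff a_i\ind_{\cl_\P(c)}b$; since $c'\in\dcl(bc)$ the indiscernibility hypotheses survive, and we keep $c'=\dcl(bc)\cap\cl_\P(c)$. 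Writing $C=\cl_\P(c)=\cl_\P(c')$, the characterisation above turns the condition into ``$\cl_\P(c'b)\cap\cl_\P(c'a_i)=C$ iff $i>0$'', a statement about $\cl_\P$-closures of finite sets that is invariant under $\Aut(/bc)$ acting on $a_i$, hence compatible with the indiscernibility of $I^\pm$ over $bc$. Now pass to limit types at the cut $0$: $p=\lim(I/I)$ (Lascar-strong), $p^\pm=\lim(I^\pm/Ibc')$, with $A=\cb(p)=\cb(p^+)=\cb(p^-)\in\bdd(I^+)\cap\bdd(I^-)$ and $a_i\ind_A I^-bc'$ for $i>0$, $a_i\ind_A I^+bc'$ for $i<0$. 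Form $e_0=\bdd(a_1c')\cap\bdd(Ac')$ and $A_0=\cb(a_1/e_0)$ as in Theorem~\ref{stabind}; here relative one-basedness gives $A_0\subseteq\cl_\P(a_1)\cap\bdd(Ac')$, and since $a_1\ind_A c'$ (from $a_1\ind_A I^-bc'$) one gets $\cl_\P(Aa_1)\cap\cl_\P(Ac')=\bdd(A)$, whence in fact $A_0\subseteq\bdd(A)$. So, just as in Theorem~\ref{stabind}, $I$ stays indiscernible over $A_0$, one derives $a_1\ind_{A_0}bc'$ while $a_{-1}$ still forks with $b$ over $\cl_\P(A_0c')$, one absorbs $A_0$ into the parameters and folds $c'$ into $b$, reaching the situation $a_1\ind_{\cl_\P(\emptyset)}b$, $a_{-1}\nind_{\cl_\P(\emptyset)}b$ with the limit structure preserved; by indiscernibility of $I^\pm$ over $b$ this propagates to ``$\cl_\P(b)\cap\cl_\P(a_i)\subseteq\cl_\P(\emptyset)$ iff $i>0$''.

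The \textbf{main obstacle} is the finishing step, which in Theorem~\ref{stabind} uses regular types and weights and is unavailable here. Instead I would again form the limit types of $I$ over the new base, getting $A=\cb(p)\in\bdd(I^+)\cap\bdd(I^-)$ with $a^*\models p^-$ satisfying $a^*\equiv_b a_{-1}$ and $a^*\ind_A b$; then $\cl_\P(b)\cap\cl_\P(a^*)\equiv_b\cl_\P(b)\cap\cl_\P(a_{-1})\supsetneq\cl_\P(\emptyset)$, while relative one-basedness applied over $A$ gives $\cl_\P(b)\cap\cl_\P(a^*)\subseteq\cl_\P(Ab)\cap\cl_\P(Aa^*)=\cl_\P(A)$, so $\cl_\P(A)\cap\cl_\P(b)\supsetneq\cl_\P(\emptyset)$. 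On the other hand, using $A\in\bdd(I^+)$ together with the fact that $I^+$ is, after the reductions, a Morley sequence over $A$ which is moreover indiscernible over $b$ and whose members are each independent from $b$, one expects $A\ind b$, hence $A\ind_{\cl_\P(\emptyset)}b$, hence $\cl_\P(A)\cap\cl_\P(b)=\cl_\P(\emptyset)$ --- the desired contradiction. The crux, and the place where the real work lies, is exactly this last claim that the canonical base of the limit type is independent from $b$: extracting it from the configuration in a merely simple theory is the analogue of, and the replacement for, the weight computation in Theorem~\ref{stabind}.
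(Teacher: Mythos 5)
There is a genuine gap, and you have in fact located it yourself: your argument ends with the unproved claim that the canonical base $A$ of the limit type is independent from $b$, and nothing in the configuration yields this. Knowing $a_i\ind b$ for each $i>0$ does not give $I^+\ind b$, hence does not give $A\ind b$ for $A\in\bdd(I^+)$; extracting joint independence from element-wise independence is exactly what the weight computation achieves in Theorem \ref{stabind}, and you offer no substitute for it. There is also a problem earlier in your reduction: from $A_0\subseteq\cl_\P(a_1)\cap\bdd(Ac')$ and $a_1\ind_A c'$ your ``relative one-basedness'' identity can at best give $A_0\subseteq\cl_\P(A)$ (the identity, as you state it, produces the $\cl_\P$-closure of the base, and $A$ need not be $\cl_\P$-closed), not $A_0\subseteq\bdd(A)$; but $I$ is only known to remain indiscernible over $\bdd(A)$, so you cannot absorb $A_0$ into the parameters. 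The auxiliary identity $\cl_\P(uw)\cap\cl_\P(vw)=\cl_\P(w)$ for $u\ind_w v$ is itself asserted without proof.

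The paper's proof avoids all of this by not imitating the CM-triviality argument at all. After taking $A=\cb(p)=\cb(p^+)=\cb(p^-)$ it sets $e=\cl_\P(a_1)\cap\bdd(A)$; Fact \ref{fact:sigmabase} gives $a_1\ind_e A$, and since $a_{-1}\equiv_{\bdd(A)}a_1$ the \emph{same} $e$ equals $\cl_\P(a_{-1})\cap\bdd(A)$, whence also $a_{-1}\ind_e A$. Combined with $a_{\pm1}\ind_A bc$ this yields $a_1\ind_e bc$ and $a_{-1}\ind_e bc$. Setting $c'=\dcl(bc)\cap\cl_\P(c)$, Fact \ref{clind} converts the hypothesis $a_1\ind_{\cl_\P(c)}b$ into $\cl_\P(a_1)\ind_{c'}b$, hence $e\ind_{c'}bc$; transitivity with $a_{-1}\ind_e bc$ gives $a_{-1}\ind_{c'}bc$ and then $a_{-1}\ind_{\cl_\P(c)}b$, the desired contradiction. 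The symmetry $a_1\equiv_{\bdd(A)}a_{-1}$, which transfers information from the positive to the negative side of the cut through the single hyperimaginary $e$, is the idea your proposal is missing; with it, no analogue of the weight argument is needed.
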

\begin{proof} Suppose not. Then there is an indiscernible sequence $I=(a_i:i\in\Q)$ and tuples $b$, $c$ such that\begin{itemize}                                                                                                          \item $I^+=(a_i:i>0)$ is indiscernible over $I^-bc$,
\item $I^-=(a_i:i<0)$ is indiscernible over $I^+bc$, and
\item $a_i\ind_{\cl_\P(c)} b$ if and only if $i>0$.\end{itemize}
As before, we consider limit types with respect to the cut at $0$. Let
$$p=\lim(I/I),\quad p^+=\lim(I^+/Ib)\quad\text{and}\quad p^-=\lim(I^-/Ib).$$
By finite satisfiability, $p^+$ and $p^-$ are both non-forking extensions of $p$, which is Lascar-strong. Let
$$A=\cb(p)=\cb(p^+)=\cb(p^-)\in\bdd(I^+)\cap\bdd(I^-).$$
As in the proof of Theorem \ref{stabind} we have
$$a_i\ind_AI^+bc\text{ for all }i<0,\quad\text{and}\quad a_i\ind_AI^-bc\text{ for all }i>0.$$
We consider first $e=\cl_\P(a_1)\cap\bdd(A)$. Then $a_1\ind_{e} A$ by Fact \ref{fact:sigmabase}; since $I$ remains indiscernible over $\bdd(A)$ we have $a_{-1}\equiv_{\bdd(A)} a_1$, whence $e=\cl_\P(a_{-1})\cap\bdd(A)$ and $a_{-1}\ind_e A$. On the other hand, since $e\in\bdd(A)$ and $a_i\ind_A bc$ for $i\in\Q$ we obtain
$$a_1\ind_e bc\quad\mbox{and}\quad a_{-1}\ind_e bc.$$
Now put $c'=\dcl(bc)\cap\cl_\P(c)$; note that $\cl_\P(c')=\cl_\P(c)$. Then $b\ind_{c'} \cl_\P(c)$ by Fact \ref{clind}. Moreover, $a_1\ind_{\cl_\P(c)} b$ yields $\cl_\P(a_1)\ind_{\cl_\P(c)} b$ by Fact \ref{clind}, whence $\cl_\P(a_1)\ind_{c'} b$. Thus $e\ind_{c'} b$ and hence $e\ind_{c'} bc$ since $c\subseteq c'$. But now $c'\subseteq\dcl(bc)$ and $a_{-1}\ind_e bc$ imply that $a_{-1}\ind_{c'} bc$. Hence $a_{-1}\ind_{\cl_\P(c)} b$ by Fact \ref{clind}, as $\cl_\P(c')=\cl_\P(c)$. This contradiction finishes the proof.\end{proof}

\begin{remark} If the theory is supersimple, we can take $\P$ to be the family of all non one-based {\em regular} types.\end{remark}
\begin{remark} Theorem \ref{stabindP} generalises the fact that independence is stable in a one-based theory. For a true generalisation of Theorem \ref{stabind} to arbitrary theories, one should take $\P$ to be the family of all $2$-ample types. This is work in progress.\end{remark}

\end{document}